\DeclareMathOperator{\Tr}{Tr}
\DeclarePairedDelimiter\floor{\lfloor}{\rfloor}
 \providecommand{\norm}[1]{\lVert#1\rVert}
\newcommand{\nn}{\nonumber}
\newcommand{\pt}{\partial}
\newcommand{\be}{\begin{equation}}
	\newcommand{\ee}{\end{equation}}
\newcommand{\ba}{\begin{eqnarray}}
	\newcommand{\ea}{\end{eqnarray}}
\newcommand{\baa}{\begin{array}}
	\newcommand{\eaa}{\end{array}}
\newcommand{\na}{\nabla}
\newcommand{\ijd}{_{ij}}
\newcommand{\iju}{^{ij}}
\newcommand{\nr}[1]{(\ref{#1})}  
\newcommand{\rmi}[1]{{\mbox{\scriptsize #1}}}  
\newcommand{\ria}{\rightarrow}
\newcommand{\pminus}[1]{^{-#1}}
\newcommand{\taux}{^{\tau(x,v)}}
\theoremstyle{definition}
\theoremstyle{definition}
\newtheorem{theorem}{Theorem}[section]
\theoremstyle{definition}
\newtheorem{lemma}{Lemma}[section]
\theoremstyle{definition}
\theoremstyle{definition}
\newtheorem{definition}{Definition}[section]
\theoremstyle{remark}
\newtheorem{remark}{Remark}
\numberwithin{equation}{section}
\newcommand{\LR}{\mathcal{L}}
\newcommand{\R}{\mathbb{R}}
\def\C{\mathbb C}
\newcommand{\Cinf}{C^\infty}
\newcommand{\Cinfo}{C^\infty_0}
\def\p{\partial}
\begin{document}

\title[The Light ray transform]{On the interplay between the light ray and the magnetic X-ray transforms}

\author[L. Oksanen]{Lauri Oksanen}
\address{Department of Mathematics and Statistics, University of Helsinki, Helsinki FI-00014, Finland}
\email{Lauri.oksanen@helsinki.fi}

\author[G.P. Paternain]{Gabriel P. Paternain}
\address{ Department of Mathematics, University of Washington, Seattle, WA 98195, USA}
\email {gpp24@uw.edu}

\author[M. Sarkkinen]{Miika Sarkkinen}
\address{Department of Mathematics and Statistics, University of Helsinki, Helsinki FI-00014, Finland}
\email{miika.sarkkinen@helsinki.fi}

\begin{abstract} We study the light ray transform acting on tensors on a stationary Lorentzian manifold. Our main result is injectivity up to the natural obstruction as long as the associated magnetic vector field satisfies a finite degree property with respect to the vertical Fourier decomposition on the unit tangent bundle. This is based on an explicit relationship between the geodesic vector field of the Lorentzian manifold and the magnetic vector field.

\end{abstract}

\maketitle


\section{Introduction}

This paper is concerned with the light ray transform of tensors on a stationary Lorentzian manifold. We start with a compact Riemannian $n$--manifold with boundary $(M, g)$ and a smooth 1-form $\omega$ on $M$. The pair $(g, \omega)$ gives rise to an $(n+1)$--dimensional Lorentzian manifold $\overline{M} = \mathbb{R} \times M$, where the Lorentzian metric is given by:
\begin{equation}\label{gstat2}
\overline{g} = -(dt + \omega)^2 + g.
\end{equation}
Here the 1-form $\omega$ and the Riemannian metric $g$ are independent of time $t$. 

Stationary spacetimes are usually defined by the condition that the metric possess a timelike Killing vector field \cite[Sec. 6.1]{Wald:1984rg}. 
If the timelike Killing vector has unit length, the spacetime is further called ultrastationary \cite{GHWW_09}. A metric of the above form clearly has a timelike Killing field $\pt_t$ and $\overline{g}(\pt_t, \pt_t) = -1$, so that our metric falls into the ultrastationary class. However, an ultrastationary metric is locally conformally related to a general stationary metric, and the object we are interested in -- the light ray transform -- is injective in the whole conformal class if it is injective for some metric in the class \cite[Cor. 2]{feizmohammadi2021light}. Therefore, in this paper we call a metric admitting gauge \nr{gstat2} stationary. 
Stationary Lorentzian metrics are of interest since one expects gravitating systems to settle down to a stationary spacetime at late times. For instance, all black hole solutions describing astrophysical black holes in their ground state are stationary.

It is well-known that null geodesics of stationary Lorentzian manifolds can be described in terms of magnetic geodesics on $(M, g)$, with the magnetic field given by $-d\omega$, see for instance \cite{Ger_07, GHWW_09}. In this paper, we take one step further and explore the relationship between transport equations with an eye towards understanding the injectivity properties of the light ray transform acting on tensors.

Let $G$ denote the infinitesimal generator of the magnetic flow on the unit sphere bundle $SM$. Recall that any function $f \in C^\infty(SM)$ can be expanded in terms of vertical spherical harmonics \cite{paternain2023geometric}.
We shall say that $f$ has degree $m$ if all Fourier modes for $k>m$ are zero. A function $f$ has finite degree if it has degree $m$ for some $m$.

Assume that $G$ is non-trapping, i.e., every magnetic geodesic hits $\partial M$ in finite time. We shall say that $G$ has the \emph{finite degree property} if given any $f\in C^{\infty}(SM)$ with finite degree, any solution $u\in C^{\infty}(SM)$ to 
\begin{equation}
G u = f,\;\;\;\;\;\;u|_{\partial SM}=0\label{eq:T}
\end{equation}
has finite degree. 
The main goal of this paper is to show that if $G$ has the finite degree property, then the light ray transform acting on tensors is injective, up to natural obstructions.

Recall that the null geodesics of $(\overline{M}, \overline{g})$ come with a unique parametrization up to an affine transformation. Without loss of generality we parameterize a null geodesic $\gamma$ such that its projection to $M$ has speed one with respect to $g$.

Given a symmetric $m$-tensor $\alpha \in C^\infty(S^m(T^*\overline{M}))$, we define the light ray transform of $\alpha$ as:
\begin{equation}
({\mathcal L}_m \alpha)(\gamma) = \int_{a}^{b} \alpha_{\gamma(s)}(\dot{\gamma}(s),\dots,\dot{\gamma}(s))\,ds. \label{LR-trafo}
\end{equation}
where $\gamma:[a,b]\to \overline{M}$ is a null geodesic connecting the boundary points $\gamma(a), \gamma(b) \in \partial M$. This transform has a natural kernel when $m\geq 1$. When $m=1$, the natural kernel is just exact forms with primitive vanishing on $\partial M$, while for $m\geq 2$ it consists of elements of the form
\begin{equation}\label{eq:kernel}
\overline{d}^{s}\beta+\xi\,\overline{g},
\end{equation}
where $\beta \in C^\infty(S^{m-1}(T^*\overline{M}))$ with $\beta |_{\partial M}=0$, $\overline{d}^{s}$ is the symmetrized covariant derivative and $\xi\,\overline{g}$ is the symmetric tensor product of $\overline{g}$ with $\xi \in C^\infty(S^{m-2}(T^*\overline{M}))$.

Previous work on the light ray transform includes, e.g., \cite{stefanov2017support,wang2017parametrices,ilmavirta2018x,lassas2018,feizmohammadi2021light,vasy2021light}. Our main theorem can be seen as a generalization of the injectivity result \cite[Thm. 2]{feizmohammadi2021light} on static Lorentzian manifolds.

\begin{theorem} Assume $G$ has the finite degree property and $\alpha \in C^\infty(S^m(T^*\overline{M}))$ has compact support in $\overline{M}$.
If $({\mathcal L}_m \alpha)(\gamma)=0$ for all null geodesics $\gamma$, then $\alpha$ must have the form given by \eqref{eq:kernel}.
\label{thm:main}
\end{theorem}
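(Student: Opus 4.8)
The plan is to reduce the vanishing of the light ray transform $\mathcal{L}_m\alpha$ to a transport equation for the magnetic flow $G$ on $SM$, and then to exploit the finite degree property to produce the desired gauge. First I would use the stationarity of $\overline{g}$: since $\alpha$ has compact support and $\partial_t$ is a Killing field, decompose $\alpha$ along the $t$-direction and Fourier transform in $t$, reducing to the study of $e^{i\lambda t}$-modes on $M$. For each such mode, a null geodesic $\gamma$ of $\overline{g}$ with unit-speed spatial projection is exactly a magnetic geodesic of $(M, g)$ with magnetic field $-d\omega$, up to the reparametrization $t(s) = t_0 - \int(1 + \omega(\dot\gamma_M))\,ds$; substituting $\dot\gamma = \dot t\,\partial_t + \dot\gamma_M$ into \eqref{LR-trafo} and expanding $\alpha_{\gamma(s)}(\dot\gamma,\dots,\dot\gamma)$ by multilinearity turns the condition $\mathcal{L}_m\alpha = 0$ into a statement that a certain attenuated-type integral of the spatial components of $\alpha$ along magnetic geodesics vanishes, where the attenuation comes from the $\omega$-twist and the $e^{i\lambda t}$ factor.

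The second step is to promote this to a transport equation on $SM$: defining $u(x,v) = \int$ along the magnetic geodesic from $(x,v)$ to $\partial M$ of the relevant integrand, one gets $Gu = f$ with $u|_{\partial SM} = 0$, where $f\in C^\infty(SM)$ is built polynomially from the components of $\alpha$ (and their contractions with $\omega$). The key point is that $f$ has \emph{finite degree} — indeed its vertical Fourier content is controlled by $m$ plus a bounded contribution from $\omega$ — so the finite degree property applies and yields that $u$ itself has finite degree. This is the mechanism replacing the Pestov-identity / Fourier-mode arguments that work in the static case: in the static setting $\omega = 0$ and $G$ is the geodesic vector field, for which the finite degree property is classical, while here it is exactly the hypothesis.

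The third step is to extract the gauge from the finiteness of the degree of $u$. Knowing $u$ has finite degree, look at the top Fourier modes of the equation $Gu = f$: writing $G = X + V$-type splitting adapted to the magnetic flow (the horizontal magnetic derivative plus the vertical operator $d\omega$ acting as a zeroth-order vertical term), the highest-degree components of $Gu = f$ force the top modes of $u$ to satisfy a first-order equation whose solvability gives a symmetric tensor $\beta$ on $\overline{M}$ with $\beta|_{\partial M} = 0$ such that $\alpha - \overline{d}^s\beta$ has strictly lower tensorial degree modulo the metric term $\xi\,\overline{g}$; iterating downward in $m$ finishes the argument, exactly as in \cite[Thm. 2]{feizmohammadi2021light}. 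One must also restore the full $t$-dependence: assembling the $\lambda$-modes back and checking that the family of primitives $\beta$ depends smoothly on $\lambda$ with the right decay (using compact support of $\alpha$) so that the inverse Fourier transform in $t$ yields a genuine $\beta\in C^\infty(S^{m-1}(T^*\overline{M}))$.

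The main obstacle I expect is step three — the descent argument — in the magnetic rather than geodesic setting: the vertical operator no longer anticommutes as cleanly with the vertical Laplacian because $d\omega$ mixes Fourier modes by $\pm 1$, so the "top mode" analysis must track the interplay between the tensorial degree $m$, the degree of $u$ (which the hypothesis only bounds abstractly, not by an explicit function of $m$), and the $\lambda$-dependence all at once. Making the gauge $\beta$ manifestly come from an $\overline{M}$-tensor (rather than just a function on $SM$) requires identifying the finite-degree solution $u$ with the symmetrized derivative of a tensor, which is where the structure of \eqref{eq:kernel} — including the $\xi\,\overline{g}$ ambiguity peculiar to $m\geq 2$ — has to be verified carefully.
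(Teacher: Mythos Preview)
Your outline captures the right architecture --- Fourier in $t$, reduce null geodesics to magnetic geodesics, derive a transport equation on $SM$, invoke the finite degree hypothesis, then extract the gauge --- but there is a genuine gap at the hinge between steps one and two, and the paper's resolution of it is precisely the technical point you have not identified.

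After Fourier transforming in $t$, the transport equation for the $\tau$-mode is \emph{not} $G\hat u(\tau)=\hat f(\tau)$ but the attenuated equation
\[
G\hat u(\tau)+i\tau(1-\omega)\hat u(\tau)=-\hat f(\tau),\qquad \hat u(\tau)|_{\partial SM}=0.
\]
The hypothesis of the theorem is the finite degree property for $G$ alone, not for the attenuated operator $G+i\tau(1-\omega)$; so you cannot simply apply it mode-by-mode as written. The paper's trick (its Lemma~3.3) is to differentiate this equation $k$ times in $\tau$ and then set $\tau=0$: each resulting equation has the form $G\,\partial_\tau^k\hat u(0)=-\partial_\tau^k\hat f(0)-ik(1-\omega)\partial_\tau^{k-1}\hat u(0)$, whose right-hand side has degree $m$ by induction (since $(1-\omega)$ has degree $1$ and $\partial_\tau^{k-1}\hat u(0)$ has degree $m-1$), so the \emph{unattenuated} finite degree property applies at every step. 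Compact support in $t$ then makes $\hat u$ real analytic in $\tau$, so the Taylor coefficients $\partial_\tau^k\hat u(0)$ determine $\hat u(\tau)$ everywhere, and $u$ has degree $m-1$ globally. This is exactly why the compact-support assumption is needed and why the paper remarks that dropping it would require assuming the finite degree property for the attenuated operator instead.

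Your extraction step is also rougher than what the paper does. There is no downward iteration in $m$: the paper first proves (Lemma~3.1) that the finite degree property is equivalent to the \emph{degree $m$ property} (solution has degree exactly $m-1$, not merely finite), so $u$ is directly the image of a pair $(\beta_{m-1},\beta_{m-2})$ of symmetric tensors on $M$. One then computes $Xu$ explicitly using $X=(1-\omega)\partial_t+G$ and the identity $G\,l_k\xi=l_{k+1}d^s\xi+k\,l_kF^*(\xi)$, matches by parity, and verifies via the Christoffel symbols of $\overline g$ that the result is $\overline d^s\beta+\xi\overline g$ in one stroke. Your worry about $d\omega$ shifting modes by $\pm 1$ is exactly what the identity for $G$ encodes, and the $\xi\overline g$ term drops out of a direct computation rather than from an iteration.
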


In all likelihood one can remove the assumption of compact support on $\alpha$ by replacing the finite degree assumption on $G$ with a corresponding assumption for the attenuated transport operator $G+(1-\omega)i\tau$, where $\tau\in \mathbb{R}$, see Remark \ref{remark:atte} below.

To make Theorem \ref{thm:main} effective, it remains for us to comment on conditions guaranteeing that $G$ (or the attenuated transport operator) satisfies the finite degree property. A full discussion of this is given in Subsection \ref{subs:Gfinite} below.

We will say that $G$ has the {\it degree m property} if given $f$ with degree $m$, a solution $u$ to \eqref{eq:T} has degree $m-1$. We will prove that $G$ has the finite degree property if and only if $G$ has the degree $m$ property for any $m\in \mathbb{Z}_{\geq 0}$ (see Lemma \ref{deg-m-prop}). Moreover, the degree $m$ property for $G$ is equivalent to the magnetic X-ray transform acting on pair of symmetric tensors of degree $m-1$ and $m$ being injective up to the natural obstruction (see Lemma \ref{I_m-inj}). Curiously, this natural obstruction has not been described before for $m\geq 3$; the case $m=2$ was considered in \cite{dairbekov2007}.

\subsection{Applications of light ray transform}

\subsubsection{Cosmological perturbation theory}

In observational cosmology, a large part of what we know about the early universe is based on measurements of the Cosmic Microwave Background (CMB), the `afterimage' of the Big Bang. Though possessing a highly uniform temperature distribution, there are small anisotropies in the measured CMB temperature that can be analyzed using cosmological perturbation theory. One contribution to CMB anisotropies comes from the integrated Sachs--Wolfe effect \cite{Sachs:1967er}, which is due to evolution of the gauge-invariant scalar perturbations (Bardeen potentials) along the CMB photon trajectory. Overall photon redshift is thus modified by an integral of the time derivatives of the scalar perturbations along the photon world line. This amounts to the light ray transform of a scalar function. 

More generally, one can show that the observed redshift perturbation accumulating along the photon trajectory is given by the light ray transform of a symmetric two-tensor that involves the metric perturbation and its first time derivative \cite[Thm. 4.2]{lassas2018}. Given that the small redshift anisotropies are known, one can then ask whether we can recover some information about the underlying metric perturbation, i.e. can the light ray transform be inverted.

\subsubsection{Lorentzian Calder\'on problem}

The light ray transform is related to 
the Lorentzian Calder\'on problem. There are various formulations of the problem, depending on the precise geometric setting. A brief formulation is as follows.
Let $(\overline{M},\overline{g})$ be a smooth Lorentzian manifold with timelike boundary, write $\Box_{\overline{g}}$ for the canonical wave operator on $\overline{M}$,
and consider the Cauchy data set 
    \begin{align}
\mathcal C_0 
= 
\{ (u|_{\p \overline{M}}, \p_\nu u|_{\p \overline{M}}) 
\mid
\text{$u \in H^2(\overline{M})$ solves $\Box_{\overline{g}} u = 0$ on $\overline{M}$}\}.
    \end{align}  
The problem is to find $\overline{g}$, up to an isometry fixing $\p \overline{M}$, given $\mathcal C_0$ and $\overline{M}$. 

It is typical to assume that there is smooth $\tau : \overline{M} \to \R$ with timelike differential $d\tau$. Then the wave operator is strictly hyperbolic with respect to the level surfaces of $\tau$ in the sense of \cite[Def. 23.2.3]{hormander2007}, and the natural initial-boundary value problem for the wave equation has a unique solution. 

To keep the discussion simple, let us suppose that $\tau$ is a proper map. Then $\overline{M}$ is exhausted by the compact preimages $\tau^{-1}([-T,T])$ for $T > 0$. Assuming further that all the null geodesics are non-trapped in the sense that their maximal domains of definition are compact intervals, we can apply \cite[Th. 1.3]{oksanen2024}, together with the boundary determination \cite[Th. 1.4]{oksanen2024}, to see that the Cauchy data set with the vanishing initial conditions
    \begin{align}
\mathcal C_1 
= 
\{ (u|_{\p \overline{M}}, \p_\nu u|_{\p \overline{M}}) 
\mid\ 
&\text{$u \in H^2(\overline{M})$ solves $\Box_{\overline{g}} u = 0$ on $\overline{M}$ and} \nn
\\
&\text{$u = 0$ when $\tau \le -T$ for some $T > 0$}\}
    \end{align}
determines the scattering relation 
    \begin{align}
\alpha(\gamma(a), \dot \gamma(a)) = (\gamma(b), \dot \gamma(b)),
    \end{align}
where $\gamma : [a,b] \to \overline{M}$ is a maximal null geodesic. 

A closely related earlier result \cite{stefanov2018} recovers the scattering relation without assuming the existence of $\tau$ from data that corresponds roughly to $\mathcal C_0$ with $\Box_{\overline{g}} u = 0$ replaced by $\Box_{\overline{g}} u \in C^\infty(\overline{M})$. The proof uses a parametrix construction and thus avoids actually solving the initial-boundary value problem for the wave equation. 

The scattering relation is uniquely associated, up to an overall factor, with the so-called smooth defining function. Letting $\tau \mapsto \overline{g}_\tau$ be a one-parameter family of Lorentzian metrics with $\overline{g}_0$ being some given metric, and $r_\tau$ the associated family of defining functions, the linearization $\pt_\tau r_\tau\rvert_{\tau=0}$ gives the light ray transform of the linear metric perturbation $\pt_\tau \overline{g}_\tau \rvert_{\tau=0}$ on $(\overline{M},\overline{g}_0)$ \cite[Thm. 2.1(c)]{stefanov2022lorentzian}. One is thus lead to consider the light ray transform of symmetric two-tensors.

In \cite{alexakis2022} the Lorentzian Calder\'on problem was solved in a fixed conformal class under certain geometric assumptions, the most important of which is a curvature bound in the sense of Andersson and Howard \cite{andersson1998}. In particular, the proof contains a step that recovers $\mathcal C_1$ from $\mathcal C_0$. 

Fixing the conformal class eliminates the diffeomorphism invariance in the problem. Moreover, when $\dim(\overline{M}) = 1+n$ and $n \ge 2$, the problem can be reduced to the problem to find $q \in C^\infty(\overline{M})$ given $(\overline{M},\overline{g})$ and 
    \begin{align}
\mathcal C_2 
= 
\{ (u|_{\p \overline{M}}, \p_\nu u|_{\p \overline{M}}) 
\mid\ 
&\text{$u \in H^2(\overline{M})$ solves $\Box_{\overline{g}} u + qu = 0$ on $\overline{M}$ and}
\\
&\text{$u = 0$ when $\tau \le -T$ for some $T > 0$}\}
    \end{align}  
The reduction is based on the rescaling
    \begin{align}
\Box_{c^{p-2} \overline{g}} (c^{-1} u) = c^{1-p} \Box_{\overline{g}} u + u c^{-p} \Box_{\overline{g}} c,
    \end{align}
where $c \in C^\infty(\overline{M})$ is positive and $p = 2(n+1)/(n-1)$, see e.g. \cite[Prop. 2.1]{liimatainen2022}.
Furthermore, it follows from \cite[Th. 1.3]{oksanen2024} that $\mathcal C_2$ determines the light ray transform of $q$. Here the assumptions are as above, that is, $\tau$ is a proper map with timelike differential and  all null geodesics are non-trapped.

Let us consider a related problem reducing to inversion of the light ray transform for one forms. Let $\nabla = d + a$ be the covariant derivative associated to a connection $a$ on the trivial line bundle $\overline{M} \times \C$. In other words, $a$ is a smooth one form. Consider the problem to recover $a$, up to the gauge $a \mapsto a + d\phi$, with $\phi$ a smooth function vanishing on $\p \overline{M}$, given $(\overline{M},\overline{g})$ and 
    \begin{align}
\mathcal C_3
= 
\{ (u|_{\p \overline{M}}, \nabla_\nu u|_{\p \overline{M}}) 
\mid\ 
&\text{$u \in H^2(\overline{M})$ solves $\nabla^* \nabla u = 0$ on $\overline{M}$ and}
\\
&\text{$u = 0$ when $\tau \le -T$ for some $T > 0$}\}.
    \end{align}
Here the adjoint is taken with respect to $\overline{g}$. 
Again, with the same assumptions on $\tau$ and the null geodesics, it follows from \cite[Th. 1.3]{oksanen2024} that $\mathcal C_3$ determines the parallel transport map with respect to $\nabla$, from $\gamma(a)$ to $\gamma(b)$,
along all maximal null geodesics $\gamma : [a,b] \to \overline{M}$, see also Example 1.9 there. As the parallel transport is simply the multiplication by $\exp(\int_\gamma a)$, we recover the light ray transform $\int_\gamma a$.

The equation $\nabla^* \nabla u = 0$ arises as a linearization of the equation satisfied by the pre-quantized Higgs field. A related inverse problem without linearization is treated in \cite{chen2022}. In this case, higher rank vector bundles $\overline{M} \times \C^k$ are of primary interest. With minor modifications, the proof in \cite{oksanen2024} gives the same reduction to the parallel transport for any rank $k$. However, when $k > 1$, it is an open question if a connection can be recovered, up to the gauge, from the parallel transport, even in the case of Minkowski geometry. 

The paper is organized as follows. In Section 2, we describe how null geodesics of a stationary Lorentzian metric are mapped, by a Hamiltonian reduction, to magnetic geodesics on the base Riemannian manifold. In Section 3, we show how the light ray transform is converted into a transport equation on the tangent bundle of the stationary manifold, and how this in turn is reduced by Fourier transform into a magnetic transport problem on the unit sphere bundle of the Riemannian manifold. Section 4 contains the proof of our main theorem. Finally, in Section 5 we provide a discussion on the finite degree property. In Appendices we give further details on some calculations and proofs omitted in the main text.

\subsection*{Acknowledgements}  LO and MS were supported by the European Research Council of the European Union, grant 101086697 (LoCal), and the Reseach Council of Finland, grants 347715, 353096 (Centre of Excellence of Inverse Modelling and Imaging) and 359182 (Flagship of Advanced Mathematics for Sensing Imaging and Modelling). Views and opinions expressed are those of the authors only and do not necessarily reflect those of the European Union or the other funding organizations. GPP was supported by NSF grant DMS-2347868 and is grateful to Claude Warnick for discussions related to \cite{GHWW_09}.

\section{From null geodesics to magnetic geodesics}

For the metric \nr{gstat2}, null geodesics on $\overline{M}$ can be reduced to magnetic geodesics on $M$. We will show this in the Hamiltonian formalism (cf. \cite{munoz2024scattering}). First, let us write the metric \nr{gstat2} in some coordinates $(t,x)$ where $x$ is a coordinate system on $M$
\begin{equation}
    \overline{g} = - (d t + \omega_i(x) d x^i )^2 + g\ijd (x) d x^i d x^j.
\end{equation}
Here we employed the notation where the symmetrized tensor product of any two symmetric tensors $S$ and $T$ is simply written as $S\, T$. Note then that the inverse metric for $\overline{g}$ is
\begin{equation}
    \overline{g}\pminus{1} = -(1-\omega_i \omega^i) \pt_t^2 - 2 \omega^i \pt_t \pt_i + g\iju \pt_i \pt_j ,
\end{equation}
where $(g\iju)$ is the matrix of the inverse metric $g\pminus{1}$ and $\omega^i = g\iju \omega_j$.
Any null tangent vector $\overline{v} = (v^t, v) \in T_p\overline{M}, v \in T_x M,$ then satisfies
\begin{equation}
    (v^t + \omega_x(v))^2 = \norm{v}_g^2 \ ,
\end{equation}
where $\norm{\cdot}_g$ is the norm induced by $g$. Similarly, the inverse metric $g\pminus{1}$ induces the norm $\norm{\cdot}_{g\pminus{1}}$ in the cotangent space. The metric \nr{gstat2} defines a Lagrangian $L: T\overline{M} \to \R$ by
\begin{equation}
    L(t,x,\dot{t},\dot{x}) = \frac{1}{2}\overline{g}((\dot{t},\dot{x}),(\dot{t},\dot{x})) = \frac{1}{2}\left( -(\dot{t}+\omega_x(\dot{x}))^2 + \norm{\dot{x}}_g^2\right).
\end{equation}
The conjugate momenta are given by
\begin{align}
    p_t &= \frac{\pt L}{\pt \dot{t}} = -(\dot{t} + \omega_x(\dot{x})), \\
    p &= \frac{\pt L}{\pt \dot{x}} = - (\dot{t} + \omega_x(\dot{x}))\omega_x + \dot{x}^\flat.
\end{align}
The Hamiltonian $H : T^*\overline{M} \to \R$ is then obtained from the Legendre transform of the Lagrangian
\begin{equation}
    H(t,x,p_t,p) = \frac{1}{2}\left( -p_t^2 + \norm{p - p_t \omega_x}_{g\pminus{1}}^2 \right).
\end{equation}
Null geodesics correspond to the energy level $\Sigma = H\pminus{1}(0)\setminus (\overline{M}\times\{0\})$ (the zero vector is spacelike). Time $t$ is a cyclic coordinate in the Hamiltonian so $p_t$ is conserved, which means that $p_t\rvert_\Sigma$ foliates $\Sigma$ into invariant hypersurfaces. We focus our attention on $p\pminus{1}_t(\{-1\})\subset \Sigma$ where we have 
\begin{equation}\label{dott}
    \dot{t} = 1 - \omega_x(\dot{x}).
\end{equation}
With this choice, we get $\norm{p + \omega_x}_{h\pminus{1}}^2 = 1.$ Now we can make the identification $\Sigma \cap \{p_t = -1\} \cong \R \times \Tilde{\Sigma}$ where $\Tilde{\Sigma}=\{ (x,p) \in T^*M : \norm{p + \omega_x}^2_{g\pminus{1}} = 1 \}$. The corresponding Hamiltonian on $p_t\pminus{1}(\{-1\})$,
\begin{equation}
    \Tilde{H}(x,p) = \frac{1}{2}\norm{p + \omega_x}^2_{g\pminus{1}},
\end{equation}
describes the motion of a charged particle in a magnetic field $-d\omega$ where $\omega$ is the potential one-form. Hamilton's equations read
\begin{equation}
    \dot{x}^i = \frac{\pt \Tilde{H}}{\pt p_i} = p^i, \quad \dot{p}_i = - \frac{\pt \Tilde{H}}{\pt x^i} = - \frac{1}{2}\pt_i g^{kl}(p_k + \omega_k) (p_l + \omega_l) - g^{kl}(p_k + \omega_k) \pt_i \omega_l.
\end{equation}
After a little bit of index gymnastics, these yield the well-known Lorentz force equation
\begin{equation}
    D_s \dot{x} = F(\dot{x}),
\end{equation}
where $D_s = \na_{\dot{x}}$ is the covariant derivative along $x(s)$ and the bundle map $F: TM \ria TM$ is given by $F^i_{\,\, j} = -(d\omega)^i_{\,\, j}$. We denote the transpose of $F$ by $F^*$. For later purposes, it is convenient to define the action of the operator $F^*$ on symmetric tensors of any rank $m=1,2,...$ by
 \begin{equation}
     F^*(\xi)_x(v_1,...,v_m) = \frac{1}{m}\left(
     \xi_x(F(v_1),v_2,...,v_m) + ... + \xi_x(v_1,...,F(v_m))
     \right), 
 \end{equation}
 where $\xi \in \Cinf(S^m(T^*M))$ and $v_1,...,v_m \in T_x M$, and setting $F^*(f) = f$ for any function $f \in \Cinf(M)$.
 Solutions to the Lorentz force equation are magnetic geodesics of $g$ determined by $F$. The pair $(g,\omega)$ defines a magnetic system on $M$; we also denote this by a triple $(M,g,\omega)$. 

Thus, null geodesics on $\overline{M}$ are mapped to magnetic geodesics on $M$ by the above reduction. On the other hand, given a magnetic geodesic $x$, there exists a null geodesic $\gamma$ that maps to $x$ under the canonical projection of $\overline{M}$ onto $M$, obtained by solving \nr{dott} for $t$, that is 
\begin{equation}\label{t(s)}
    t(s) = t_0 + s - \int_0^s \omega_{x(\sigma)}(\dot{x}(\sigma)) d\sigma.
\end{equation}
The geodesic $\gamma(s) = (t(s),x(s))$ is unique up to the choice of the initial time $t_0$. Notice that with the choice $p_t = -1$, the magnetic geodesics have unit normalization. 
Let $\varphi_s : SM \to SM$ be the magnetic flow of $(g,\omega)$ running with unit speed. The vector bundle of normalized null directions is the subset of $T\overline{M}$ given by 
\begin{equation}
    \{ (z,\overline{v}) \in T\overline{M}: z = (t,x) \in \overline{M}, \overline{v}=(v^0, v) \in T_z\overline{M} , \norm{v}_g = 1, v^0 = 1 - \omega_x(v)\}.
\end{equation}
Since the timelike component $v^0$ is fully determined by the magnetic potential and the unit spacelike direction, we denote points in the bundle by $(t,x,v)$ and the bundle itself by $\R \times SM$.
Then we define the normalized null geodesic flow $\phi_s : \R \times SM \to \R \times SM$ by
\begin{equation}
    \phi_s(t_0,x,v) = (t(s) , \varphi_s(x,v)),
\end{equation}
where $t(s)$ is given by \nr{t(s)}.
We denote by $X$ the generator of $\phi_s$ and by $G$ the generator of $\varphi_s$.

\section{Light ray transform and transport equation}

Given a point $(x,v) \in SM$ and a magnetic geodesic $\gamma_{x,v}$, the exit time of $\gamma_{x,v}$ is defined as the $\tau(x,v) \in [0,\infty]$ such that $\gamma_{x,v}:[0,\tau(x,v)] \to M$ cannot be extended in the positive direction as a smooth curve in $M$. We assume that $\varphi_s$ is non-trapping with exit time function $\tau: SM \to [0,\infty)$, i.e., all magnetic geodesics exit the manifold in finite time. Recall that the influx boundary $\pt_+ SM$ is defined as the set of inwards-pointing unit tangent vectors at the boundary $\pt M$ and, similarly, the outflux boundary $\pt_- SM$ is comprised of the outwards-pointing unit tangent vectors at the boundary, so that $\pt SM = \pt_- SM \cup \pt_+ SM$. The intersection $\pt_0 SM = \pt_+ SM \cap \pt_- SM = S(\pt M)$ is called the glancing region \cite[Ch. 3]{paternain2023geometric}.

Let us then define the light ray transform for smooth functions on $\R \times SM$.
\begin{definition}
    Given $f \in \Cinf(\R \times SM)$, we define the light ray transform of $f$ as
    \begin{equation}
        (\LR f)(t,x,v) = \int_0^{\tau(x,v)}f (\phi_s(t,x,v))ds, 
    \end{equation}
    where $(x,v) \in \pt_+ SM$.
\end{definition}
\noindent 
The transform is well-defined in the given class due to the non-trapping assumption, which ensures that the integration takes place over a finite interval. Below we will assume that in addition, $f \in \Cinfo(\R \times SM)$, i.e., $f$ is also compactly supported in time, which guarantees analyticity of Fourier transforms with respect to the time variable.
Earlier in \nr{LR-trafo} we defined the light ray transform $\LR_m$ of symmetric $m$-tensors along geodesics connecting two boundary points. We can re-express $\LR_m \alpha$ as a function on $\R \times SM$ by setting $(\LR_m \alpha)(t,x,v) = (\LR_m \alpha)(\gamma_{t,x,v})$ with $(x,v) \in \pt_+ SM$, where $\gamma_{t,x,v} : s \mapsto (\pi_{\overline{M}} \circ \phi_s)(t,x,v)$ and $\pi_{\overline{M}}: T\overline{M} \to \overline{M}$ is the canonical projection.

Recall that a function $f(x,v)$ in $\Cinf(SM)$ can be expanded at each point $(x,v)\in SM$ in terms of its spherical harmonic modes with respect to the direction $v$. A function is said to have degree $m$ if all the modes $k > m$ are vanishing. Similarly, a function $f(t,x,v)$ in $\Cinf(\R \times SM)$ has a series expansion in terms of spherical harmonics with respect to $v$ and we say that it has degree $m$ provided that all the modes $k>m$ vanish.

Given a symmetric tensor $\alpha \in \Cinf(S^m(T^*\overline{M}))$, we define a linear map 
$$T : \Cinf(S^m(T^*\overline{M})) \to \Cinf(\R \times SM)$$ by 
\begin{equation}
    T\alpha = (\pi_{\overline{M}}^*\alpha) (X,...,X).
\end{equation}
Notice that due to the fact that $T\overline{g}=0$, the above map is not an isomorphism. However, it only loses information about tensors that are already in the kernel of $\LR_m$ and therefore it makes the light ray transforms in the two classes equivalent. More concretely, since $\alpha$ can be written as
\begin{equation}
    \alpha = \sum_{j=0}^m \alpha_j dt^j,
\end{equation}
where $\alpha_j \in \Cinf(\R,\Cinf(S^{m-j}(T^*M)))$, the corresponding function $f = T\alpha$ on $\R \times SM$ becomes
\begin{align}
    f(t,x,v) &= (\pi_{\overline{M}}^*\alpha)(X,...,X)(t,x,v) \nn \\
    &= \alpha_{\pi_{\overline{M}}(t,x,v)}((\pi_{\overline{M}})_* X(t,x,v),...,(\pi_{\overline{M}})_*  X(t,x,v))  \nn \\
    &= \alpha_{\gamma_{t,x,v}(0)}(\dot{\gamma}_{t,x,v}(0),...,\dot{\gamma}_{t,x,v}(0)) \nn \\
    &= \sum_{j=0}^m \binom{m}{j} (1 - \omega_x(v))^j\alpha_j(t)_x(v,...,v).
\end{align}
Here we used the fact that the null geodesic $\gamma_{t,x,v} : s \mapsto (\pi_{\overline{M}} \circ \phi_s)(t,x,v)$ satisfies $\dot{\gamma}_{t,x,v}(0) = (1- \omega_x(v), v)$. Now it is easy to see that the light ray transforms defined in both classes of objects yield the same result:
\begin{align}
     (\LR f)(t,x,v) &= \int_0\taux (\pi_{\overline{M}}^*\alpha) (X(\phi_s(t,x,v)),...,X(\phi_s(t,x,v)))ds \nn \\
    &= \int_0\taux \alpha_{\gamma_{t,x,v}(s)}(\dot{\gamma}_{t,x,v}(s),...,\dot{\gamma}_{t,x,v}(s)) ds \nn \\
    &= (\LR_m \alpha) (t,x,v).
\end{align}
Put in another way, $\LR_m = \LR \circ T$.

Letting $(t,x,v)\in \R\times SM$, we define a function $u: \R \times SM \to \R$ by
\begin{equation}
    u(t,x,v) = \int_0\taux f(\phi_s(t,x,v)) ds.
\end{equation}
By the definition of exit time $\tau(x,v)$, we have $u\rvert_{\pt_- SM} \equiv 0$. Restricting on the influx boundary gives
\begin{equation}\label{u-restricted}
    u\rvert_{\R\times \pt_+ SM} = \LR f.
\end{equation}
On the other hand, definition of a geodesic vector field then yields
\begin{align}
    Xu(t,x,v) = \frac{d}{ds}\Big\rvert_{s=0} (u(\phi_s(t,x,v)) 
    = -f(t,x,v),
\end{align}
so that we get a transport problem $Xu = - f, u\rvert_{\R\times\pt_- SM} = 0$.
Combining this with \nr{u-restricted}, we see that $\LR f = 0$ if and only if the transport problem with overdetermined boundary conditions
\begin{equation}\label{transport0}
    Xu = - f, \quad u\rvert_{\R\times\pt SM} = 0
\end{equation}
has a solution. Furthermore, the solution is a smooth function, as shown in Lemma \ref{spacetime-u-lemma}.

The geodesic vector field $X$ can be expressed as
\begin{equation}\label{Xfield}
    X(t,x,v) = (1- \omega_x(v))\pt_t + G(x,v),
\end{equation}
which follows immediately from the definition of $\phi_s$. Using this form of $X$ and Fourier transforming \nr{transport0} with respect to $t$ (assuming that the Fourier transform exists) then reduces this into a transport problem on a time slice
\begin{equation}\label{transport1}
    G \hat{u}(\tau) + i\tau(1- \omega)\hat{u}(\tau) = - \hat{f}(\tau), \quad \hat{u}(\tau)\rvert_{\pt SM} = 0 \quad \forall \tau \in \R,
\end{equation}
where we denote $\hat{u}(\tau)= \hat{u}(\tau,\cdot)$ and $\hat{f}(\tau) = \hat{f}(\tau,\cdot)$.
This corresponds to an attenuated magnetic X-ray transform of $\hat{f}$ where the attenuation is given by $i\tau(1- \omega)$. 
Here we will only focus on the case $\tau = 0$, that is, we study the transport problem
\begin{equation}\label{G-eq}
        Gu = -f, \quad u\rvert_{\pt SM} = 0,
    \end{equation}
where $f \in \Cinf(SM)$ has degree $m$. 
The transport problem \nr{G-eq} corresponds to vanishing of the ordinary magnetic X-ray transform $I_m$ at $f$, defined as follows. Given a pair $[p,q]\in \Cinf(S^m(T^*M))\times \Cinf(S^{m-1}(T^*M))$ of symmetric tensors, we set
\begin{equation}
    I_m [p,q](x,v) = \int_0\taux \left( p_{\gamma_{x,v}(s)}(\dot{\gamma}_{x,v}(s),...,\dot{\gamma}_{x,v}(s)) + q_{\gamma_{x,v}(s)}(\dot{\gamma}_{x,v}(s),...,\dot{\gamma}_{x,v}(s)) \right) ds, 
\end{equation}
where $(x,v) \in \pt_+ SM$, $\gamma_{x,v} = s \mapsto (\pi_M \circ \varphi_s)(x,v)$ is a magnetic geodesic, and $\pi_M: TM \to M$ is the canonical projection. Then, $I_m[p,q] = 0$ if and only if the function $u: SM \to \R$ given by
\begin{equation}
    u(x,v) = \int_0\taux f(\varphi_s(x,v)) ds,
\end{equation}
where $f(x,v) = p_x(v,...,v) + q_x(v,...,v)$, is smooth and satisfies \nr{G-eq} (see Lemma \ref{u-lemma}). It is straightforward to check that $I_m[p,q]$ vanishes identically when $[p,q]$ is given by
 \begin{align}
     p &= \sum_{k=0}^{\floor{\frac{m-1}{2}}} \left( d^s \xi_{m-2k-1} g^k + (m-2k-2) F^*(\xi_{m-2k-2}) g^{k+1} \right) \\
     q &= \sum_{k=0}^{\floor{\frac{m-1}{2}}} \left( d^s \xi_{m-2k-2} g^k + (m-2k-1) F^*(\xi_{m-2k-1}) g^{k} \right),
 \end{align}
 where $\xi_i \in \Cinf(S^{i}(T^*M)), \xi_i\rvert_{\pt M} = 0$ for all $i = 0,1,...,m-1$. (Tensors of negative rank are identified with the zero map.) However, this can be simplified a lot by observing that for $n = 0,1,..., m - 2k$,
 \begin{align}
     F^*(\xi_{m-2k-n} g^k)(v,...,v) &= (\xi_{m-2k-n} g^k)(F(v),v,...,v) \nn \\
     &= \frac{1}{(m-n)!}\left(
     (m-n-1)! \xi_{m-2k-n}(F(v),v,...,v) g^k(v,...,v) \right. \nn \\
     &\left. \quad + \, ... + (m-n-1)! \xi_{m-2k-n}(v,...,v,F(v)) g^k(v,...,v)
     \right) \nn \\
     &= \frac{1}{m-n}\left( \xi_{m-2k-n} (F(v),v,...,v) + \, ...\right.\nn \\
     &\left. \quad  +\, \xi_{m-2k-n}(v,...,v,F(v)) \right) g^k(v,...,v) \nn \\
     &= \frac{m-2k -n}{m-n} F^*(\xi_{m-2k-n})(v,...,v) g^k(v,...,v),
 \end{align}
 from which we get the commutation relation
 \begin{equation}\label{F-g-comm}
     (m-2k-n)F^*(\xi_{m-2k-n}) g^k = (m-n) F^*(\xi_{m-2k-n} g^k).
 \end{equation}
 Then we define the symmetric tensors $\xi \in \Cinf(S^{m-1}(T^*M)), \eta \in \Cinf(S^{m-2}(T^*M))$ by
 \begin{align}
     \xi &= \sum_{k=0}^{\floor{\frac{m-1}{2}}} \xi_{m-2k-1}g^k \\
     \eta &=\sum_{k=0}^{\floor{\frac{m-1}{2}}} \xi_{m-2k-2}g^k,
 \end{align}
 so that $\xi\rvert_{\pt M} = \eta\rvert_{\pt M} = 0$.
 Using now identity \nr{F-g-comm} and metric compatibility of the connection, $p$ and $q$ can be simply written as
 \begin{align}
     p &= d^s \xi + (m-2) F^*(\eta) g \label{potential-p}\\
     q &= d^s \eta + (m-1) F^*(\xi), \label{potential-q}
 \end{align}
 We call a pair of this form a \emph{potential pair}.
 \begin{definition}
     We say that the magnetic X-ray transform $I_m$ is s-injective provided that $I_m[p,q] \equiv 0$ implies that $[p,q]$ is a potential pair.
 \end{definition}
 \begin{definition}
    We say that $G$ has the \emph{degree $m$ property} provided that the solution $u$ to the transport problem \nr{G-eq} has degree $m-1$ if $f$ has degree $m$.
\end{definition}
\noindent In Sec. \ref{finite-deg-section} we prove that this is actually equivalent to the finite degree property.
\begin{lemma}\label{deg-m-prop}
     $G$ has the finite degree property if and only if it has the degree $m$ property for all $m$.
 \end{lemma}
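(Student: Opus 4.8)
The plan is to treat the two implications separately. The direction ``degree $m$ property for all $m$ $\implies$ finite degree property'' is immediate: a function of finite degree has degree $m$ for some $m$, so by the degree $m$ property the solution of \eqref{G-eq} has degree $m-1$, in particular finite degree.

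For the converse I would argue as follows. Assume $G$ has the finite degree property, let $f\in\Cinf(SM)$ have degree $m$, and let $u$ solve \eqref{G-eq}. By the finite degree property $u$ has degree $N$ for some $N\in\mathbb{Z}_{\ge0}$, and the goal is to show $N\le m-1$. The mechanism is the action of $G$ on the vertical spherical-harmonic subspaces $\Omega_k\subset\Cinf(SM)$ of \cite{paternain2023geometric}. Writing $X_g$ for the geodesic vector field of $(M,g)$ (i.e.\ $G$ in the gauge $\omega=0$) and splitting $X_g=X_++X_-$ with $X_\pm\colon\Omega_k\to\Omega_{k\pm1}$, one has
\begin{equation}
G=X_++X_-+\mathbb{F},
\end{equation}
where $\mathbb{F}$ is the vertical vector field whose value at $(x,v)$ is the vertical lift of $F_x(v)$. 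This field is fibrewise tangent because $F_x$ is $g$-antisymmetric, and since $F_x\in\mathfrak{so}(T_xM)$ it generates the fibrewise rotations $v\mapsto e^{tF_x}v$ and therefore preserves every $\Omega_k$. Consequently, if $w\in\Cinf(SM)$ has no Fourier modes above degree $k$, then the degree-$(k+1)$ component of $Gw$ is exactly $X_+w_k$.

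The main step is then to peel off the top mode of $u$. Suppose $N\ge m$. Comparing degree-$(N+1)$ components in $Gu=-f$ and using $N+1>m\ge\deg f$ yields $X_+u_N=0$, while $u|_{\partial SM}=0$ forces $u_N|_{\partial M}=0$ because the decomposition is fibrewise in $v$. Since $X_+$ has injective principal symbol at every nonzero covector (so $X_+$ is overdetermined elliptic), one would then use a unique continuation property from the boundary to conclude that a section of $\Omega_N$ lying in $\ker X_+$ and vanishing on the hypersurface $\partial M$ must vanish identically; hence $u_N=0$ and $u$ has degree $\le N-1$. Iterating this $N-m+1$ times brings the degree of $u$ down to $\le m-1$, which is precisely the degree $m$ property. (For $m=0$ the peeling still applies to the positive modes and terminates with $u=u_0$ constant on $M$, hence $u\equiv0$ by the boundary condition.)

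The hard part is this last ingredient: the injectivity of $X_+$ modulo the boundary condition, equivalently the statement that $(M,g)$ carries no nonzero conformal Killing symmetric tensor vanishing on $\partial M$. On a closed manifold this is automatic because $\ker X_+$ is finite dimensional, but on a manifold with boundary $\ker X_+$ may be infinite dimensional, so one genuinely needs a unique continuation argument from the boundary for the (finite-type) conformal Killing operator. I expect everything else to reduce to routine manipulation with the splitting $G=X_++X_-+\mathbb{F}$ and the vertical grading, so this unique continuation statement — presumably established separately, e.g.\ in an appendix — is the crux.
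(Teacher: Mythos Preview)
Your proposal is correct and follows essentially the same route as the paper: both isolate the top vertical mode of $u$, use that the Lorentz-force part of $G$ does not raise vertical degree, and then invoke the absence of nonzero trace-free conformal Killing tensors on $(M,g)$ vanishing on $\partial M$ --- the paper cites \cite[Thm.~1.3]{dairbekov2010} for precisely the ``crux'' you flagged. The only cosmetic difference is that the paper works via the isomorphism $l_m$ and Lemma~\ref{G-lemma} to get a one-step contradiction on the top mode, whereas you phrase things through the splitting $G=X_++X_-+\mathbb{F}$ and peel modes iteratively.
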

\noindent $G$ having the finite degree property or, equivalently, the degree $m$ property amounts to s-injectivity of $I_m$ as stated in the following lemma whose proof we also give in Sec. \ref{finite-deg-section}.
\begin{lemma}\label{I_m-inj}
     For $m\in \mathbb{Z}_{\geq 0}$, the magnetic X-ray transform $I_m$ is s-injective on $(M,g,\omega)$ if and only if $G$ has the degree $m$ property.
 \end{lemma}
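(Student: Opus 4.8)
The plan is to turn both implications into one dictionary relating the transport problem on $SM$ to the tensor formulas \eqref{potential-p}--\eqref{potential-q}. For a symmetric tensor $W\in\Cinf(S^k(T^*M))$, write $jW\in\Cinf(SM)$ for the function $(x,v)\mapsto W_x(v,\dots,v)$. Differentiating $s\mapsto W_{\gamma(s)}(\dot\gamma(s),\dots,\dot\gamma(s))$ along a magnetic geodesic $\gamma$ and using $\nabla_{\dot\gamma}\dot\gamma=F(\dot\gamma)$ and the definition of $F^*$ gives
\begin{equation}
G(jW)=j(d^sW)+k\,j(F^*(W)),\tag{$\star$}
\end{equation}
where $d^sW\in\Cinf(S^{k+1}(T^*M))$; I will also use $j(g R)=jR$ on $SM$ for any symmetric $R$, since $|v|_g=1$ there. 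This $(\star)$ is essentially the computation behind the fact, recalled above, that $I_m$ annihilates potential pairs (see the appendix / \cite{dairbekov2007}).

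Next I record the mode--tensor dictionary. Splitting the vertical Fourier series by parity and using the standard bundlewise identification of pure degree-$k$ spherical harmonics with trace-free symmetric $k$-tensors, the map $(P,Q)\mapsto jP+jQ$ is a (bundlewise) linear isomorphism of $\Cinf(S^{m-1}(T^*M))\oplus\Cinf(S^{m-2}(T^*M))$ onto $\{u\in\Cinf(SM):\deg u\le m-1\}$, and likewise $(p,q)\mapsto jp+jq$ is an isomorphism of $\Cinf(S^{m}(T^*M))\oplus\Cinf(S^{m-1}(T^*M))$ onto $\{f\in\Cinf(SM):\deg f\le m\}$. Moreover $jP$ and $jQ$ lie in complementary parity subspaces, so $jP+jQ$ vanishes on $\pt SM$ if and only if $P|_{\pt M}=Q|_{\pt M}=0$.

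The core claim is then: let $f$ have degree $m$, write $f=jp+jq$ with $p\in\Cinf(S^m(T^*M))$ and $q\in\Cinf(S^{m-1}(T^*M))$, and let $u(x,v)=\int_0\taux f(\varphi_s(x,v))\,ds$, the solution of $Gu=-f$, $u|_{\pt_-SM}=0$ (unique by non-trapping, smooth by Lemma \ref{u-lemma}). Then $u$ has degree $m-1$ and $u|_{\pt SM}=0$ if and only if $[p,q]$ is a potential pair. For ``if'', given potentials $\xi\in\Cinf(S^{m-1}(T^*M))$, $\eta\in\Cinf(S^{m-2}(T^*M))$ vanishing on $\pt M$, put $w:=-j\xi-j\eta$; applying $(\star)$, collecting terms by rank, and rewriting $(m-2)j(F^*(\eta))=(m-2)j(F^*(\eta)\,g)$ to absorb the rank-$(m-2)$ term shows $Gw=-j(d^s\xi+(m-2)F^*(\eta)g)-j(d^s\eta+(m-1)F^*(\xi))=-(jp+jq)=-f$, while $w|_{\pt SM}=0$; by uniqueness $u=w$, which has degree $\le m-1$. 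For ``only if'', if $\deg u\le m-1$ then $u=jP+jQ$ for unique $P\in\Cinf(S^{m-1}(T^*M))$, $Q\in\Cinf(S^{m-2}(T^*M))$ with $P|_{\pt M}=Q|_{\pt M}=0$ (by the parity remark and $u|_{\pt SM}=0$); applying $(\star)$, absorbing the rank-$(m-2)$ term as before, and matching parities in $Gu=-jp-jq$ gives $p=-(d^sP+(m-2)F^*(Q)g)$ and $q=-((m-1)F^*(P)+d^sQ)$, i.e.\ $[p,q]$ is a potential pair with $\xi=-P$, $\eta=-Q$.

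Granting the claim, the lemma follows at once. If $I_m$ is s-injective and $u$ solves \eqref{G-eq} with $f$ of degree $m$, then $u=\int_0\taux f(\varphi_s(\cdot))\,ds$ and $u|_{\pt_+SM}=0$ forces $I_m[p,q]=0$, hence $[p,q]$ is a potential pair, hence $\deg u\le m-1$ by the claim; so $G$ has the degree $m$ property. Conversely, if $G$ has the degree $m$ property and $I_m[p,q]=0$, then with $f=jp+jq$ the function $u:=\int_0\taux f(\varphi_s(\cdot))\,ds$ solves \eqref{G-eq}, so $\deg u\le m-1$, so $[p,q]$ is a potential pair by the claim; thus $I_m$ is s-injective. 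The step I expect to carry the real weight is the mode--tensor dictionary --- in particular verifying that $u|_{\pt SM}=0$ corresponds exactly to the potentials vanishing on $\pt M$, which is what makes the degree statement and s-injectivity line up on the nose; identity $(\star)$ itself is routine but is precisely what aligns the $d^s$/$F^*$ bookkeeping with the definition \eqref{potential-p}--\eqref{potential-q} of a potential pair.
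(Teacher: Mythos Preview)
Your proof is correct and follows essentially the same approach as the paper: your identity $(\star)$ is the paper's Lemma~\ref{G-lemma}, your mode--tensor dictionary is the isomorphism $l_m$ together with Lemma~\ref{l-lemma}, and both directions proceed by matching parities and invoking uniqueness for the transport equation exactly as the paper does. The only difference is cosmetic---you package the key computation as a separate ``core claim'' rather than inlining it, and your notation $j$ corresponds to the paper's $l_m$.
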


\begin{lemma}\label{Xu-lemma}
    Let $f \in \Cinfo(\R \times SM)$ have degree $m$, and suppose $G$ has the degree $m$ property. Then the solution $u$ of the transport problem
    \begin{equation}\label{transport2}
        Xu = - f, \quad u\rvert_{\pt SM} = 0.
    \end{equation}
    has degree $m-1$.
\end{lemma}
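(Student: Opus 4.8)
The plan is to pass to the time--Fourier side, where, thanks to \eqref{Xfield} and \eqref{transport1}, the problem becomes the attenuated magnetic transport equation $G\hat u(\tau)+i\tau(1-\omega)\hat u(\tau)=-\hat f(\tau)$, and then to extract the degree of $\hat u(\tau)$ by differentiating this identity in the attenuation parameter $\tau$ at $\tau=0$ and invoking the degree $m$ property of $G$ at each step. I would first record that the solution $u$ of \eqref{transport2} is smooth (Lemma~\ref{spacetime-u-lemma}) and compactly supported in $t$: this uses the compact support of $f$ together with the non-trapping hypothesis, since along a null geodesic the time coordinate obeys $\dot t(s)=1-\omega_{x(s)}(\dot x(s))$, which is bounded, so $u(t,\cdot,\cdot)$ can only be nonzero for $t$ in a bounded interval. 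Hence, for each $(x,v)$, $\hat u(\tau,x,v)=\int_\R e^{-it\tau}u(t,x,v)\,dt$ is entire in $\tau$ by Paley--Wiener, smooth in $(x,v)$, solves \eqref{transport1}, and satisfies $\hat u(\tau)|_{\partial SM}=0$. Since the Fourier transform in $t$ commutes with the vertical spherical-harmonic decomposition (it touches neither $x$ nor $v$), $\hat f(\tau)$ and all its $\tau$-derivatives at $0$, namely $\partial_\tau^k\hat f(0)=\int_\R(-it)^k\widehat{f}(0)\ldots$ — more precisely $\partial_\tau^k\hat f(0)$, the transform of $(-it)^k f$ — have degree $m$, because $f$ does.

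Next I would differentiate \eqref{transport1} $k$ times in $\tau$ and set $\tau=0$. Since $G$ does not involve $\tau$ and $\partial_\tau^k\big(\tau\,g(\tau)\big)\big|_{\tau=0}=k\,g^{(k-1)}(0)$, this yields the recursion
\begin{equation*}
G\bigl(\partial_\tau^k\hat u(0)\bigr) = -\,\partial_\tau^k\hat f(0)\;-\;i\,k\,(1-\omega)\,\partial_\tau^{k-1}\hat u(0), \qquad \partial_\tau^k\hat u(0)\big|_{\partial SM}=0 ,
\end{equation*}
with the convention that the last term is absent for $k=0$. Each $\partial_\tau^k\hat u(0)=\int_\R(-it)^k u(t,\cdot,\cdot)\,dt$ lies in $\Cinf(SM)$, vanishes on $\partial SM$, and solves $G(\cdot)=-g$ with $g$ the right-hand side above; by uniqueness of solutions to \eqref{G-eq} (integrate $G$ along magnetic geodesics and use $u|_{\partial SM}=0$ together with non-trapping) it is therefore \emph{the} solution \eqref{G-eq} associated with $g$. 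I would then induct on $k$. For $k=0$, $\hat f(0)$ has degree $m$, so the degree $m$ property forces $\hat u(0)$ to have degree $m-1$. For the inductive step, assume $\partial_\tau^{k-1}\hat u(0)$ has degree $m-1$; since $\omega=\omega_x(v)$ is a degree-one function in $v$, multiplication by $(1-\omega)$ raises the degree by at most one, so $(1-\omega)\partial_\tau^{k-1}\hat u(0)$ has degree at most $m$, and $\partial_\tau^k\hat f(0)$ has degree $m$; hence $g$ has degree $m$, and the degree $m$ property gives that $\partial_\tau^k\hat u(0)$ has degree $m-1$.

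Finally I would conclude by analyticity in $\tau$. Fix $j\ge m$ and look at the $j$-th vertical Fourier component $(\hat u(\tau))_j$, which is entire in $\tau$ for each $(x,v)$; all of its Taylor coefficients at $\tau=0$ vanish, since $\partial_\tau^k(\hat u(0))_j=(\partial_\tau^k\hat u(0))_j=0$ because $\partial_\tau^k\hat u(0)$ has degree $m-1<j$. Therefore $(\hat u(\tau))_j\equiv 0$ for all $\tau\in\R$, i.e. $\hat u(\tau)$ has degree $m-1$ for every $\tau$, and inverting the Fourier transform in $t$ (again an operation commuting with the vertical decomposition) shows that $u(t,\cdot,\cdot)$ has degree $m-1$ for every $t$, which is the claim.

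The genuinely delicate points I expect are, first, the input that $u$ is compactly supported in $t$ — this is exactly where the compact-support hypothesis on $f$ is used, and it is what makes $\tau\mapsto\hat u(\tau)$ entire and the Taylor-coefficient argument legitimate; and second, checking that the degree $m$ property applies verbatim to the right-hand sides in the recursion, which rests on the uniqueness remark above and on the elementary degree count for $(1-\omega)$ times a finite-degree function. Everything else is bookkeeping on the Fourier transform and the vertical harmonic expansion.
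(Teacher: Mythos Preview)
Your proof is correct and follows essentially the same route as the paper's: Fourier transform in $t$, differentiate the attenuated transport equation \eqref{transport1} repeatedly at $\tau=0$ to get the recursion $G\partial_\tau^k\hat u(0)=-\partial_\tau^k\hat f(0)-ik(1-\omega)\partial_\tau^{k-1}\hat u(0)$, prove by induction (using the degree $m$ property) that each $\partial_\tau^k\hat u(0)$ has degree $m-1$, and then use analyticity of $\tau\mapsto\hat u(\tau)$ (from compact support in $t$) to conclude. Your final step, phrased as vanishing of the entire function $(\hat u(\tau))_j$ for $j\ge m$ via its Taylor coefficients, is a slightly cleaner repackaging of the paper's Taylor-series-plus-bounded-projection argument, but the substance is identical.
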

\begin{proof}
    From \nr{transport2} we get that the Fourier transform $\hat{u}(\tau)\in \Cinf(SM)$ satisfies
    \begin{equation}\label{transport3}
        G \hat{u}(\tau) + i\tau(1- \omega)\hat{u}(\tau) = - \hat{f}(\tau), \quad \hat{u}(\tau)\rvert_{\pt SM} = 0
    \end{equation}
    for all $\tau$. We then claim that $\pt_\tau^k\hat{u}(0)$ has degree $m-1$ for each $k=0,1,2,...\,$. Let us prove this by induction. Taking $\tau=0$ in \nr{transport3}, we have
    \begin{equation}
        G\hat{u}(0) = - \hat{f}(0), \quad \hat{u}(0)\rvert_{\pt SM} = 0.
    \end{equation}
    Degree $m$ property then implies that $\hat{u}(0)$ has degree $m-1$.

    For the induction step, suppose the claim holds for $0,1,...,k-1$. Differentiating \nr{transport3} $k$ times and then setting $\tau = 0$, we get the transport problem
    \begin{equation}
        G \pt_\tau^{k}\hat{u}(0) = - \pt_\tau^{k}\hat{f}(0) - i k (1-\omega) \pt_\tau^{k-1} \hat{u}(0), \quad \pt_\tau^{k}\hat{u}(0)\rvert_{\pt SM} = 0.
    \end{equation}
    The right hand side has degree $m$ by the induction assumption; hence, by the degree $m$ property, $\pt_\tau^{k}\hat{u}(0)$ is a function of degree $m-1$.

    Now since $f$ is compactly supported in time, also $u$ has to be, and thus the Fourier transform of $\hat{u}$ is real analytic with respect to $\tau$. Hence, the Taylor series 
    \begin{equation}
        \hat{u}(\tau) = \sum_{k=0}^\infty \frac{\tau^k}{k!}\pt_\tau^k \hat{u}(0)
    \end{equation}
    converges. On the right hand side, each term in the series is a function of degree $m-1$. Since the projection onto each spherical harmonic mode is a bounded operator, it follows that the series converges for each mode separately. Thus, we conclude that the series converges to a function of degree $m-1$.
    Finally, by inverse Fourier transform back to $t$ we get that the solution $u$ to \nr{transport2} has degree $m-1$.
\end{proof}

\section{Main theorem}

With the general scheme developed above, we can prove our main theorem on injectivity of the light ray transform for symmetric tensors of any rank. Notice that to show injectivity, we only need to consider symmetric tensors of the form $\alpha = \alpha_1 dt + \alpha_2$ where $\alpha_1 \in \Cinfo(\R,\Cinf( S^{m-1}(T^*M))), \alpha_2 \in \Cinfo(\R,\Cinf(S^m(T^*M)))$. Indeed, using the equality $dt^2 = - \overline{g} - 2 \omega dt - \omega^2 + g$ one can show that tensors that are at least quadratic in $dt$ can be reduced to a sum of terms of the above form and terms proportional to the metric $\overline{g}$. The latter type of terms are automatically in the kernel of the light ray transform so those we may ignore.
\begin{theorem}
    Let $m\in \mathbb{Z}_{\geq 0}$. If $G$ has the degree $m$ property, then the light ray transform $\LR_m$ is injective (up to the natural kernel) for compactly supported symmetric tensors. 
\end{theorem}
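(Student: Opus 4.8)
The plan is to combine the reductions of Sections~2--3 with the degree bound from Lemma~\ref{Xu-lemma}, and then to integrate the resulting transport solution back into a tensor potential on $\overline{M}$. Assume $\LR_m\alpha=0$. By the reduction noted before the statement we may write $\alpha=\alpha_1\,dt+\alpha_2+\rho\,\overline{g}$; since any multiple of $\overline{g}$ lies in $\ker\LR_m$ and, after applying $\overline{d}^{s}$, in the set \eqref{eq:kernel}, it suffices to treat $\alpha=\alpha_1\,dt+\alpha_2$. Because $\LR_m=\LR\circ T$, the function $f=T\alpha$ lies in $\Cinfo(\R\times SM)$ (using that $M$ is compact and $\alpha$ is compactly supported in time), it has degree $m$ by the explicit formula for $T\alpha$, and $\LR f=0$. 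Hence the overdetermined transport problem \eqref{transport2}, $Xu=-f$ with $u|_{\pt SM}=0$, has a smooth, time-compactly-supported solution $u$ by Lemma~\ref{spacetime-u-lemma}. The degree $m$ property enters here through Lemma~\ref{Xu-lemma}: the solution $u$ has degree $m-1$.

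The main step is to realize $u$ as $u=T\beta$ for a smooth symmetric $(m-1)$-tensor $\beta$ on $\overline{M}$, where $T$ now denotes the analogous map on $(m-1)$-tensors. I would establish, fibrewise, that the linear map $T$ from $S^{m-1}(T^*_{(t,x)}\overline{M})$ to the space of functions of degree $\le m-1$ on the fibre sphere $S_xM$ is onto, with kernel exactly the tensors of the form $\zeta\,\overline{g}$, $\zeta\in S^{m-3}(T^*_{(t,x)}\overline{M})$: the kernel description is the fact that a symmetric tensor whose associated polynomial vanishes on a sheet of the null cone of $\overline{g}$ is divisible by $\overline{g}(\cdot,\cdot)$, and surjectivity then follows from a dimension count, since $\dim S^{m-1}(T^*_{(t,x)}\overline{M})-\dim S^{m-3}(T^*_{(t,x)}\overline{M})$ equals the dimension of the space of functions of degree $\le m-1$ on $S^{n-1}$. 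Choosing a fibre metric on $S^{m-1}(T^*\overline{M})$ and inverting $T$ on the orthogonal complement of its kernel gives a smooth right inverse, hence a smooth $\beta$ with $T\beta=u$ that is still compactly supported in time. From $u|_{\R\times\pt SM}=0$ together with the kernel description, $\beta$ is, at each point of $\R\times\pt M$, a multiple of $\overline{g}$; extending that multiple smoothly off the boundary and subtracting it --- which does not change $T\beta$ --- I may arrange $\beta|_{\R\times\pt M}=0$.

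Finally, exactly as in the computation giving $\LR_m=\LR\circ T$, for every symmetric tensor $\beta$ on $\overline{M}$ one has $X(T\beta)=T(\overline{d}^{s}\beta)$: the curve $s\mapsto\phi_s(t,x,v)$ projects to a genuine $\overline{g}$-geodesic $\gamma$, so $\tfrac{d}{ds}\bigl(\beta_{\gamma(s)}(\dot\gamma,\dots,\dot\gamma)\bigr)=(\overline{\nabla}_{\dot\gamma}\beta)(\dot\gamma,\dots,\dot\gamma)=(\overline{d}^{s}\beta)_{\gamma(s)}(\dot\gamma,\dots,\dot\gamma)$. Therefore $T(\alpha+\overline{d}^{s}\beta)=T\alpha+X(T\beta)=f+Xu=0$, so $\alpha+\overline{d}^{s}\beta=\xi\,\overline{g}$ for some smooth symmetric $(m-2)$-tensor $\xi$; equivalently $\alpha=\overline{d}^{s}(-\beta)+\xi\,\overline{g}$ with $(-\beta)|_{\pt M}=0$, which is precisely \eqref{eq:kernel} (reading tensors of negative rank as zero, so that $m=0,1$ are included). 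Together with Lemma~\ref{deg-m-prop} this also yields Theorem~\ref{thm:main}. The conceptual core --- the degree reduction --- is already supplied by Lemma~\ref{Xu-lemma}; the genuinely new work, and what I expect to be the main obstacle, is the fibrewise analysis of $T$ in the second paragraph together with the boundary-compatible choice of the potential $\beta$.
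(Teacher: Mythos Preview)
Your argument is correct and takes a genuinely different route from the paper. After the common reduction to $u$ of degree $m-1$ via Lemma~\ref{Xu-lemma}, the paper stays on $M$: it writes $u=l_{m-1}\beta_{m-1}+l_{m-2}\beta_{m-2}$ using the isomorphism $l$, applies Lemma~\ref{G-lemma} to compute $Gu$, matches parities to read off $\alpha_1,\alpha_2$, and then uses the explicit Christoffel-symbol identities of Appendix~\ref{christoffels} (in particular $\overline{d}^{s}U=\pt_tU\,dt+d^sU+m(dt+\omega)F^*(U)$ and $\overline{d}^{s}(dt+\omega)=0$) to repackage the answer as $\overline{d}^{s}(-\beta_{m-1}-\beta_{m-2}(dt+\omega))+(\cdots)\overline{g}$. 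You instead work directly on $\overline{M}$: the fibrewise analysis of $T$ (kernel $=\zeta\,\overline{g}$ via irreducibility of the null quadric for $\dim M\geq 2$, surjectivity by the dimension count) lets you lift $u$ to an $(m-1)$-tensor $\beta$ on $\overline{M}$, and then the single identity $X\circ T=T\circ\overline{d}^{s}$, which is just the geodesic equation $\overline{\nabla}_{\dot\gamma}\dot\gamma=0$, closes the loop without any coordinate computation. Your approach is cleaner and manifestly independent of the specific stationary form of $\overline{g}$; the paper's approach, by contrast, produces an explicit potential in terms of $M$-data, which is useful if one wants concrete formulas. One small remark: since your right inverse already lands in $(\ker T)^{\perp}$, the boundary condition $u|_{\pt SM}=0$ forces $\beta|_{\R\times\pt M}\in\ker T\cap(\ker T)^{\perp}=0$ automatically, so the subtraction step you describe is harmless but unnecessary.
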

\begin{proof}
    Let $f \in \Cinfo(\R \times SM)$ be a function corresponding to a symmetric tensor of rank 0, such that $\LR f \equiv 0$. Lemma \ref{Xu-lemma} then implies that $Xu = - f, u\rvert_{\pt SM} = 0$ has only an identically vanishing solution, i.e. $f = - X u = 0$. Hence, $\LR_0$ is injective.

    Let then $f \in \Cinfo(\R \times SM)$ correspond to a one-form $\alpha = \alpha_0 dt + \alpha_1$ such that $\LR f = \LR_1 \alpha \equiv 0$. 
    Now the solution to \nr{transport2} has degree 0, which means that, by \nr{Xfield}, we have
    \begin{equation}
        (1- \omega_x(v))\pt_t u(t,x) + (Gu)(t,x,v) = - f(t,x,v) = -\alpha_0(t,x)(1- \omega_x(v)) - \alpha_1(t)_x(v),
    \end{equation}
    which gives
    \begin{equation}
        (1- \omega_x(v))\pt_t u(t,x) + du(t)_x(v) = -\alpha_0(t,x)(1- \omega_x(v)) - \alpha_1(t)_{x}(v),
    \end{equation}
    where $d$ stands for the exterior derivative on $M$.
    By equating terms of the same parity in $v$ on both sides, we obtain
    \begin{equation}
        \alpha_0 = -\pt_t u, \quad \alpha_1 = -du.
    \end{equation}
    We thus have $\alpha = - \pt_t u dt - du = \overline{d}\psi$ for $\psi = -u$, where $\overline{d}$ is the exterior derivative on $\overline{M}$.

    For rank $m\geq 2$, consider $f \in \Cinfo(\R \times SM)$ corresponding to $\alpha = \alpha_1 dt + \alpha_2$ for $\alpha_1 \in \Cinfo(\R,\Cinf( S^{m-1}(T^*M))), \alpha_2 \in \Cinfo(\R,\Cinf(S^m(T^*M)))$, such that $\LR f = \LR_m \alpha \equiv 0$. Now Lemma \ref{Xu-lemma} yields a solution $u$ with degree $m-1$, and then Lemma \ref{G-lemma} implies that
    \begin{align}
        G u(t, x,v) 
        &= d^s \beta_{m-1}(t)(v,...,v) + (m-1) F^*(\beta_{m-1}(t))(v,...,v) \nn \\ 
        &\quad+ d^s \beta_{m-2}(t)(v,...,v) + (m-2) F^*(\beta_{m-2}(t))(v,...,v),
    \end{align}
    for some $\beta_{m-1} \in \Cinfo(\R, \Cinf(S^{m-1}(T^*M))), \beta_{m-2}\in \Cinfo(\R,\Cinf(S^{m-2}(T^*M)))$. Then we get
    \begin{align}
        &(1 - \omega_x(v))(\pt_t \beta_{m-1}(t)(v,...,v) + \pt_t\beta_{m-2}(t)(v,...,v)) \nn \\
        &+ d^s \beta_{m-1}(t)(v,...,v) + (m-1) F^*(\beta_{m-1}(t))(v,...,v) \nn \\ 
        &+ d^s \beta_{m-2}(t)(v,...,v) + (m-2) F^*(\beta_{m-2}(t))(v,...,v) \nn \\
        = & - \alpha_1(t)_x(v,...,v)(1 - \omega_x(v)) - \alpha_2(t)_x(v,...,v).
    \end{align}
    By parity and using the fact that $g(v,v) = 1$, we have
    \begin{align}
        \alpha_1 &= - d^s \beta_{m-2} - (m-1)F^*(\beta_{m-1}) - \pt_t \beta_{m-1} + \pt_t \beta_{m-2}\, \omega , \\ 
        \alpha_2 &= -d^s \beta_{m-1} - (m-1)\omega F^*(\beta_{m-1}) - \omega d^s\beta_{m-2} - (m-2)F^*(\beta_{m-2})g - \pt_t \beta_{m-2} (g - \omega^2),
    \end{align}
    so that
    \begin{align}
        \alpha &= \alpha_1 dt + \alpha_2 \nn \\
        &= -\pt_t \beta_{m-1} dt - (m-1)(dt+\omega)F^*(\beta_{m-1}) - d^s \beta_{m-1}  \nn \\
        &\quad - \pt_t \beta_{m-2} (g - \omega^2 - \omega dt) -  d^s \beta_{m-2}(dt + \omega) - (m-2) F^*(\beta_{m-2})g.
    \end{align}
    Using the formulas in Appendix \ref{christoffels}, one gets that
    \begin{equation}
        -\pt_t \beta_{m-1} dt - (m-1)(dt+\omega)F^*(\beta_{m-1}) - d^s \beta_{m-1} = - \overline{d}^s \beta_{m-1}.
    \end{equation}
    On the other hand, we can write $g - \omega^2 = \overline{g} + dt^2 + 2 \omega dt$ and then use again the formulas in the appendix to show that
    \begin{align}
        &- \pt_t \beta_{m-2} (g - \omega^2 - \omega dt) -  d^s \beta_{m-2}(dt + \omega) - (m-2) F^*(\beta_{m-2})g \nn \\ 
        =& - \overline{d}^s\beta_{m-2} (dt + \omega) - (\pt_t \beta_{m-2} + (m-2) F^*(\beta_{m-2})) \overline{g} \nn \\
        =& - \overline{d}^s ( \beta_{m-2} (dt + \omega)) - (\pt_t \beta_{m-2} + (m-2) F^*(\beta_{m-2})) \overline{g}.
    \end{align}
    It then follows that
    \begin{equation}
        \alpha = \overline{d}^s (-\beta_{m-1} - \beta_{m-2} (dt + \omega)) - (\pt_t \beta_{m-2} + (m-2) F^*(\beta_{m-2})) \overline{g} \in \ker \LR_m.
    \end{equation}
    Thus, we conclude that $\LR_m$ is injective (up to the natural kernel).
\end{proof}
\begin{remark}
    The main theorem together with Lemma \ref{I_m-inj} imply that that the light ray transform $\LR_m$ is injective for $m\in \mathbb{Z}_{\geq 0}$ provided that the corresponding magnetic X-ray transform $I_m$ is s-injective. 
\end{remark}
\begin{remark}\label{remark:atte}
    We expect that a similar procedure works for the attenuated magnetic X-ray transform. If this is the case, then we could formulate a version of the finite degree property for the transport equation \nr{transport1} and prove a counterpart of Lemma \ref{Xu-lemma} where we only need to assume that $f$ has a well-defined Fourier transform.
\end{remark}

\section{Finite degree property}\label{finite-deg-section}

\subsection{When does $G$ have the finite degree property?} \label{subs:Gfinite}

As promised in the Introduction, we now discuss conditions on $(M,g, \omega)$ that ensure that $G$ has the finite degree property.  All results assume that $\partial M$ is strictly magnetic convex, meaning that  $\Pi_{x}(v,v) > g_{x}(F_{x}(v),\nu(x))$ for all $x\in \partial M$ and $v\in T_{x}\partial M$.
Here $\Pi$ denotes the second fundamental form of $\partial M$ and $\nu$ the inward unit normal. 

Suppose first that $\dim M=2$. In this case \cite[Theorem 7.6]{Ains_13} asserts that $G$ has the finite degree property if $M$ is simply connected (i.e. a 2-disk) and $G$ is free of conjugate points (i.e. it is a {\it simple} magnetic system as defined in \cite{dairbekov2007}).

Suppose now that $\dim M\geq 3$ with the case $\dim M=3$ being the physically relevant dimension. As shown in \cite{dairbekov2007}, magnetic simplicity ensures that $G$ has the degree 0 and degree 1 property and under additional geometric assumptions (cf. \cite[Theorem 5.4]{dairbekov2007})  we also have the degree 2 property.
The strongest results are more recent and arise from the deployment of the Uhlmann-Vasy approach in \cite{uhlmann2016}. Here we need to assume that the system admits
a strictly convex function, meaning there is $f\in C^{\infty}(M)$ such that $G^{2}\pi^*f>0$, where $\pi:SM\to M$ is the canonical projection. In this instance, \cite[Theorem 1.3]{zhou2018} gives that $G$ has the degree 2 property. In fact the same approach should give the finite degree property (i.e. the degree $m$ property for any $m$); while this has not been carried out for magnetic systems, it has been implemented for the geodesic vector field \cite{dHUZ_19} and the approach is robust enough to carry over to $G$ (see for instance \cite{PUZ_19} for a related case).

Note that adding an attenuation of the form $i\tau(1-\omega)$ does not really have much impact on the results above about the finite degree property. Indeed since the attenuation
is purely imaginary the methods in \cite{Ains_13} work to cover this case when $\dim M=2$, and for $\dim M\geq 3$, the methods in \cite{paternain2019} deal with any possible matrix weight (not just scalar attenuations). 

\subsection{Proofs of Lemmas \ref{deg-m-prop} and \ref{I_m-inj}}
To prove Lemmas \ref{deg-m-prop} and \ref{I_m-inj} that characterize the finite degree property in different ways, we first give a couple of auxiliary results.
Using the notation and theorems in \cite[Sec. 6.6]{paternain2023geometric}, let $l_m$ be the canonical map between symmetric $m$-tensors and finite degree functions on $SM$
 \begin{equation}
     l_m : \Cinf(S^m(T^*M)) \to \Cinf(SM), \quad l_m h (x,v) = h_x(v,...,v),
 \end{equation}
 which induces a linear isomorphism
 \begin{equation}
     l_m : \Cinf(S^m(T^*M)) \to \bigoplus_{i=0}^{[m/2]}\Theta_{m-2j},
 \end{equation}
 where $\Theta_l\subset \Cinf(SM)$ is the set of eigenfunctions of degree $l$ of the vertical Laplacian. We may then prove the following lemma.
 \begin{lemma}\label{G-lemma}
     For any $\xi \in \Cinf(S^m(T^*M))$, $m\in \mathbb{Z}_{\geq 0}$, we have $G l_m \xi = l_{m+1} d^s \xi + m l_m F^*(\xi)$.
 \end{lemma}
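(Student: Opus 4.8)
The plan is to compute $G l_m \xi$ directly from the definition of the magnetic flow. Recall that $l_m\xi(x,v) = \xi_x(v,\dots,v)$ is a function on $SM$, and $G$ is the generator of the magnetic flow $\varphi_s$, so that $(G l_m \xi)(x,v) = \frac{d}{ds}\big|_{s=0} \xi_{\gamma_{x,v}(s)}(\dot\gamma_{x,v}(s),\dots,\dot\gamma_{x,v}(s))$, where $\gamma_{x,v}$ is the magnetic geodesic with $\gamma_{x,v}(0)=x$, $\dot\gamma_{x,v}(0)=v$. First I would differentiate this multilinear expression using the Leibniz rule, splitting into a term where the covariant derivative hits $\xi$ and $m$ terms where it hits one of the velocity slots. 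The term hitting $\xi$ gives $(\na_{\dot\gamma}\xi)_{\gamma}(\dot\gamma,\dots,\dot\gamma)$, which at $s=0$ is $(\na_v\xi)_x(v,\dots,v)$; symmetrizing (which is harmless when all remaining arguments are the same vector $v$) this is exactly $(d^s\xi)_x(v,\dots,v) = l_{m+1}(d^s\xi)(x,v)$, since $d^s$ is the symmetrized covariant derivative and evaluating on the diagonal $(v,\dots,v)$ picks out the fully symmetric part.

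Next I would handle the $m$ velocity-slot terms. Each contributes $\xi_x(v,\dots, D_s\dot\gamma|_{s=0},\dots,v)$ with the derivative in one slot; but by the Lorentz force equation $D_s\dot\gamma = F(\dot\gamma)$, so $D_s\dot\gamma|_{s=0} = F(v)$. Hence the sum of these $m$ terms is $\sum_{j=1}^m \xi_x(v,\dots,F(v),\dots,v)$ with $F(v)$ in the $j$-th slot. By the very definition of $F^*$ acting on symmetric $m$-tensors given in the excerpt, namely $F^*(\xi)_x(v_1,\dots,v_m) = \frac1m\big(\xi_x(F(v_1),v_2,\dots,v_m)+\dots+\xi_x(v_1,\dots,F(v_m))\big)$, this sum equals $m\, F^*(\xi)_x(v,\dots,v) = m\, l_m F^*(\xi)(x,v)$. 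Adding the two contributions yields $G l_m\xi = l_{m+1}d^s\xi + m\, l_m F^*(\xi)$, as claimed. The cases $m=0$ (where $\xi$ is a function, $F^*(\xi)=\xi$ but the coefficient $m=0$ kills it, and $Gl_0\xi = G\pi^*\xi = l_1 d\xi$) are consistent.

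The computation is essentially routine; the one point requiring a little care is the interchange of the symmetrization inherent in $d^s$ with evaluation on the diagonal. The main (mild) obstacle is therefore bookkeeping: making sure that when the covariant derivative lands on the tensor slot, the resulting $(m+1)$-tensor $\na\xi$, evaluated at $(v;v,\dots,v)$, genuinely coincides with $l_{m+1}(d^s\xi)(x,v)$ rather than just some partial symmetrization — but this is immediate because any polarization identity shows that a symmetric tensor and its full symmetrization agree on the diagonal, and $d^s\xi$ is by definition the full symmetrization of $\na\xi$. One may also wish to note that metric compatibility of $\na$ is implicitly used when differentiating along the magnetic geodesic so that the pairing of the tensor with velocities can be differentiated slotwise. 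No other subtleties arise.
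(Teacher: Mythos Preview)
Your proof is correct and follows essentially the same approach as the paper: both arguments differentiate $\xi_{\gamma(s)}(\dot\gamma(s),\dots,\dot\gamma(s))$ along a magnetic geodesic, apply the Leibniz rule to separate the $\nabla\xi$ contribution from the $m$ velocity-slot contributions, invoke the Lorentz force equation $D_s\dot\gamma = F(\dot\gamma)$, and identify the resulting terms with $l_{m+1}d^s\xi$ and $m\,l_m F^*(\xi)$ via the definition of $F^*$ and the fact that symmetrization is invisible on the diagonal. The only cosmetic difference is that the paper starts from $l_{m+1}d^s\xi$ and rearranges to isolate $Gl_m\xi$, whereas you start from $Gl_m\xi$ and expand.
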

 \begin{proof}
 Let $(x,v)\in SM$, $\xi \in \Cinf(S^m(T^*M))$, and let $\gamma : [0,T] \to N$ be a magnetic geodesic such that $(\gamma(0),\dot{\gamma}(0))=(x,v)$. Then
 \begin{align}
     l_{m+1}(d^s \xi)(x,v) &= (d^s \xi)_x(v,...,v) = (\na \xi)_x(v,...,v) \nn \\
     &= \frac{d}{dt}\Big\rvert_{t=0}\xi_{\gamma(t)}(\dot{\gamma}(t),...,\dot{\gamma}(t)) - \xi_{\gamma(0)}(\na_{\dot{\gamma}(0)}\dot{\gamma}(0),...,\dot{\gamma}(0)) \nn \\
     &\quad - ... - \xi_{\gamma(0)}(\dot{\gamma}(0),...,\na_{\dot{\gamma}(0)}\dot{\gamma}(0)) \nn \\
     &= \frac{d}{dt}\Big\rvert_{t=0}l_m\xi(\gamma(t),\dot{\gamma}(t)) - \xi_{\gamma(0)}(F(\dot{\gamma}(0)),...,\dot{\gamma}(0)) \nn \\
     &\quad - ... - \xi_{\gamma(0)}(\dot{\gamma}(0),...,F(\dot{\gamma}(0))) \nn \\
     &= \frac{d}{dt}\Big\rvert_{t=0}l_m\xi(\varphi_t(x,v)) - m F^*(\xi)_{\gamma(0)}(\dot{\gamma}(0),...,\dot{\gamma}(0)) \nn \\
     &= (G l_m \xi)(x,v) - m (l_m F^*(\xi))(x,v),
 \end{align}
 from which the claim follows.
 \end{proof}
 It is also straightforward to prove this lemma:
 \begin{lemma}\label{l-lemma}
     For all $m\in \mathbb{Z}_{\geq 0}$ and $k=0,...,\floor{m/2}$, we have $l_m (\xi_{m-2k} g^k) = l_{m-2k} \xi_{m-2k}$, where $\xi_{m-2k} \in \Cinf(S^{m-2k}(T^*M))$.
 \end{lemma}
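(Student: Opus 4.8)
The plan is to reduce the identity to the elementary fact that $l$ turns symmetric tensor products into ordinary pointwise products of functions on $SM$, and then to invoke that the metric evaluates to $1$ on unit vectors.

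First I would recall from \cite[Sec. 6.6]{paternain2023geometric} that for symmetric tensors $S \in \Cinf(S^p(T^*M))$ and $T \in \Cinf(S^q(T^*M))$ the symmetric product $ST \in \Cinf(S^{p+q}(T^*M))$ is obtained by symmetrizing $S\otimes T$, i.e. by averaging $(S\otimes T)(w_1,\dots,w_{p+q})$ over all permutations of its arguments. When every argument is set equal to a single vector $v$, each permutation acts trivially on $(S\otimes T)(v,\dots,v)$, so
\begin{equation}
(ST)_x(v,\dots,v) = (S\otimes T)_x(v,\dots,v) = S_x(v,\dots,v)\,T_x(v,\dots,v),
\end{equation}
that is, $l_{p+q}(ST) = (l_p S)\,(l_q T)$ as functions on $SM$.

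Then I would apply this with $S=\xi_{m-2k}$ and $T=g^k$, iterating the multiplicativity $k$ times on $g^k = g\cdots g$ to get $l_{2k}(g^k)(x,v) = \big(g_x(v,v)\big)^k$. Since $(x,v)\in SM$ means $g_x(v,v)=1$, this gives $l_{2k}(g^k)\equiv 1$, hence
\begin{equation}
l_m(\xi_{m-2k}\,g^k)(x,v) = l_{m-2k}(\xi_{m-2k})(x,v)\cdot 1 = l_{m-2k}(\xi_{m-2k})(x,v),
\end{equation}
which is the assertion. The range $k=0,\dots,\floor{m/2}$ is precisely the constraint $m-2k\ge 0$ needed for $\xi_{m-2k}$ to be defined.

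There is essentially no obstacle here; the only point requiring any care is the normalization of the symmetric product, and with the averaging convention used throughout the paper (and in \cite{paternain2023geometric}) the multiplicativity of $l$ on diagonal evaluations holds with no combinatorial factors, so the computation is immediate.
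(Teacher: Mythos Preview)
Your proof is correct and is precisely the straightforward computation the paper has in mind; the paper does not spell out a proof at all, only remarking that the lemma is ``straightforward to prove,'' and your argument via multiplicativity of $l$ on diagonal evaluations together with $g_x(v,v)=1$ on $SM$ is exactly that straightforward verification.
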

 With these tools at hand, we can proceed to prove Lemma \ref{deg-m-prop}.
 \begin{proof}[Proof of Lemma \ref{deg-m-prop}]
     The `if' direction is trivial so we can focus on the `only if' part. Suppose $u$ satisfies $G u = f, u\rvert_{\pt SM} = 0$ for a degree $m$ function $f$. The finite degree property then implies that $\text{deg}(u) = n < \infty$. We can thus expand $u$ as a finite sum
     \begin{equation}
         u = \sum_{k=0}^n u_k,
     \end{equation}
     where each $u_k$ is a vertical spherical harmonic of degree $k$. We then get
     \begin{equation}
         Gu = \sum_{k=0}^n G u_k
     \end{equation}
     so we need to study how $G$ acts on spherical harmonics. For $k = 0$ this is trivial so we may assume $k\geq 1$. Due to the isomorphism between symmetric tensors and smooth functions on $SM$, for each $u_k$ there exists a unique $\xi \in \Cinf(S^k(T^*M))$ such that $u_k = l_k \xi$. By Lemma \ref{G-lemma}, we then have that
     \begin{equation}
         G u_k = G l_k \xi = l_{k+1} d^s \xi + k l_k F^*(\xi).
     \end{equation}
     
     Next we observe that $\xi$ is traceless. This follows from \cite[Lem. 2.4]{dairbekov2010} but for ease of reading we give here a proof. Indeed, if $\xi$ were not traceless, then we would have, by Lemma \ref{l-lemma}, that $l_k \xi = l_k ( \xi^\rmi{tf} + \frac{1}{n} \Tr_g(\xi) g) = l_k \xi^\rmi{tf} + \frac{1}{n}l_{k-2}\Tr_g(\xi)$, where `tf' denotes the tracefree part. Then the second term would be of degree $k-2$ and nonvanishing, which contradicts the fact that $\xi$ isomorphically corresponds to a spherical harmonic of degree $k$. Conversely, one can show that each traceless symmetric $k$--tensor is isomorphically mapped to a spherical harmonic of degree $k$: letting $\zeta \in \Cinf(S^k(T^*M))$ such that $\Tr_g \zeta = 0$, we write $l_k \zeta = \sum_{i=0}^k f_i$, which is equivalent to $\zeta = \sum_{i=0}^k l_k\pminus{1}f_i$. Now $l_k\pminus{1} f_i = \eta \in \Cinf(S^k(T^*M))$, but on the other hand, $f_i = l_i \zeta_i$ for some $\zeta_i \in \Cinf(S^i(T^*M))$ such that $\Tr_g(\zeta_i) = 0$, as we showed already. Thus, $l_k \eta = f_i = l_i \zeta_i$, which implies that $\eta = g^{(k-i)/2} \zeta_i$. Since $\zeta$ is traceless, this can hold only if $\zeta_i = 0$ whenever $k-i \geq 2$. Thus, $\zeta = l_k\pminus{1}f_k$ where $f_k$ is a spherical harmonic of degree $k$.

     Then, we decompose $d^s\xi$ to its trace and traceless parts
     \begin{equation}
         d^s \xi = (d^s \xi)^\rmi{tf} + \frac{1}{n}\Tr_g(d^s \xi) g.
     \end{equation}
     Notice that if $(d^s \xi)^\rmi{tf}$ were to vanish, then $\xi$ would be a traceless conformal Killing tensor field. However, the boundary condition $u\rvert_{\pt SM} = 0$ entails that $\xi = 0$ on the boundary, and \cite[Thm. 1.3]{dairbekov2010} implies that there are no traceless conformal Killing tensors that vanish on the boundary. Therefore, $(d^s \xi)^\rmi{tf}$ is nonzero. Further, $l_{k+1}(d^s \xi)^\rmi{tf}$ has degree $k+1$ and it cannot have any degree smaller than that due to the isomorphism between traceless symmetric tensors and spherical harmonics. This then implies that $Gu_k$ has degree $k+1$. Applying this to $k=n$, we immediately get that $m = \text{deg}(f) = \text{deg}(G u_n) = n + 1$, which gives that $u$ has degree $m-1$.
 \end{proof}
 
 \begin{definition}
      Given a function $f \in \Cinf(SM)$, its magnetic X-ray transform is defined as
      \begin{equation}
          (If)(x,v) = \int_0\taux f(\varphi_s(x,v))ds,
      \end{equation}
      where $(x,v) \in \pt_+ SM$.
 \end{definition}
 \noindent Notice that the magnetic X-ray transform $I_m$ of a symmetric tensor pair $[p,q]$ could then also be defined as $I_m[p,q] = I(l_m p + l_{m-1}q)$.
 
 Now we can prove Lemma \ref{I_m-inj}.
 \begin{proof}[Proof of Lemma \ref{I_m-inj}]
     `only if' Let $u$ and $f$ be such that the conditions of the lemma are satisfied. Then, since $u\rvert_{\pt SM} = 0$, by uniqueness for the transport equation we have $If = u\rvert_{\pt_+ SM} = 0$. We can split $f$ into even and odd pieces as $f = f^\rmi{even} + f^\rmi{odd}$. Suppose $m$ is even (the odd case proceeds analogously). Then by the isomorphism between symmetric tensors and functions on the unit sphere bundle, there are unique $p \in \Cinf(S^m(T^*M)), q \in \Cinf(S^{m-1}(T^*M))$ such that $f^\rmi{even} = l_m p$ and $f^\rmi{odd} = l_{m-1} q$. Now we have $I_m [p,q] = I(l_m p)  + I (l_{m-1}q) = I f^\rmi{even} + I f^\rmi{odd} = I f = 0$. S-injectivity of $I_m$ then entails that $[p,q]$ is a potential pair, i.e. a pair given by \nr{potential-p} and \nr{potential-q}.
     Now Lemma \ref{G-lemma} implies that
     \begin{align}
         G (l_{m-1}\xi + l_{m-2}\eta) &= l_m d^s \xi + (m-1) l_{m-1}F^*(\xi) + l_{m-1} d^s\eta + (m-2)l_{m-2}F^*(\eta) \nn \\
         &= l_m (d^s \xi + (m-2) F^*(\eta)g) + l_{m-1} (d^s\eta + (m-1) F^*(\xi)) \nn \\
         &= l_m p + l_{m-1} q \nn \\
         &= f^\rmi{even} + f^\rmi{odd} \nn \\
         &= f,
     \end{align}
     where on the second line we used Lemma \ref{l-lemma}.
     Thus, we get
     \begin{equation}
         G(u + l_{m-1}\xi + l_{m-2}\eta) = -f + f = 0.
     \end{equation}
     Since $u$ and $l_{m-1}\xi + l_{m-2}\eta$ both vanish on $\pt SM$, by uniqueness for the transport equation we have $u = - l_{m-1}\xi - l_{m-2}\eta$, which means that $u$ has degree $m-1$.

     `if' Suppose that $I_m[p,q] \equiv 0$. Then define $u: SM \to \R$ by
     \begin{equation}
         u(x,v) = - \int_0\taux (l_m p + l_{m-1}q)(\varphi_s(x,v))ds.
     \end{equation}
     Since $u\rvert_{\pt_+ SM} = I_m[p,q] = 0$, it follows from Lemma \ref{u-lemma} that $u$ is smooth and it is the unique solution to
     \begin{equation}\label{p-q-eq}
         G u = l_m p + l_{m-1} q, \quad u\rvert_{\pt SM} = 0.
     \end{equation} 
     The right hand side of \nr{p-q-eq} has degree $m$ so by the degree $m$ property of $G$, $u$ has degree $m-1$. Thus, for some $\xi \in \Cinf(S^{m-1}(T^*M)), \eta \in \Cinf(S^{m-2}(T^*M))$, we have
     \begin{equation}
         u = l_{m-1} \xi + l_{m-2} \eta.
     \end{equation}
     Now by Lemma \ref{G-lemma}
     \begin{align}
         G u &= G l_{m-1} \xi + G l_{m-2} \eta \nn \\
         &= l_m d^s \xi + (m-1) l_{m-1} F^*(\xi) + l_{m-1} d^s \eta + (m-2) l_{m-2} F^*(\eta) \nn \\
         &= l_m (d^s \xi + (m-2) F^*(\eta) g) + l_{m-1}(d^s \eta + (m-1) F^*(\xi)).
     \end{align}
     Matching this with the functions of the same parity in \nr{p-q-eq}, and using the inverse maps $l_{m}\pminus{1}, l_{m-1}\pminus{1}$, we obtain
     \begin{align}
         p &= d^s \xi + (m-2) F^*(\eta) g \\
         q &= d^s \eta + (m-1) F^*(\xi).
     \end{align}
     Thus, $[p,q]$ is a potential pair.
 \end{proof}

\newpage

\appendix

\section{Properties of the stationary Lorentzian metric
}\label{christoffels}

First we write down the Christoffel symbols for the metric $\overline{g}$ in \nr{gstat2}
\begin{align}
    &\overline{\Gamma}^t_{tt} = 0, \quad \overline{\Gamma}^i_{tt} = 0, \quad \overline{\Gamma}^t_{ti} = \frac{1}{2}(d\omega)\ijd \omega^j, \quad \overline{\Gamma}^i_{tj} = \frac{1}{2}(d\omega)^i_{\,\, j}, \quad  \nn \\
    &\overline{\Gamma}^i_{jk} = \Gamma^i_{jk} + \frac{1}{2}g^{il}(\pt_l (\omega_j \omega_k) - \omega_j \pt_k \omega_l - \omega_k \pt_j \omega_l) = \Gamma^i_{jk} + \frac{1}{2}( \omega_j (d\omega)^i_{\,\, k} + \omega_k (d\omega)^i_{\,\, j} ) \nn \\
    &\overline{\Gamma}^t\ijd = (d^s\omega)\ijd + \frac{1}{2}(\omega_i (d\omega)_{jk} + \omega_j (d\omega)_{ik})\omega^k
\end{align}
where the spatial indices are raised with $g\pminus{1}$, and $\Gamma^i_{jk}$ are the Christoffel symbols for the metric $g$. 
Using the above Christoffel symbols, one can show by a direct (but somewhat tedious) computation in local coordinates that 
\begin{equation}
     \overline{d}^s U = \pt_t U dt  + d^s U + m (dt + \omega) F^*(U),
 \end{equation}
 for any $U\in \Cinf(\R,\Cinf(S^m(T^*M)))$ with $m =0,1,2,...$. As a special case, we get
 \begin{equation}\label{domega}
     \overline{d}^s \omega = d^s\omega + (dt + \omega)F^*(\omega) .
 \end{equation}
 Furthermore, by another computation in local coordinates we have
 \begin{equation}\label{ddt}
     \overline{d}^s dt = - d^s \omega - (dt + \omega)F^*(\omega).
 \end{equation}
 Together \nr{domega} and \nr{ddt} immediately imply
 \begin{equation}
     \overline{d}^s (dt + \omega) = 0.
 \end{equation}
 Another way to see the last equality is by observing that $(dt + \omega)^\sharp = - \pt_ t$, and that $\pt_t$ is manifestly a Killing vector field of $\overline{g}$.

 \section{Regularity results for transport equations}
 
 Recall from Sec. \ref{subs:Gfinite} that $(M,g,\omega)$ forms a simple magnetic system if $\pt M$ is strictly magnetic convex, $M$ is simply connected, and the magnetic flow does not have any conjugate points.
 
 \begin{lemma}\label{u-lemma}
     Let $(M,g,\omega)$ be a simple magnetic system. If $f \in \Cinf(SM)$ is such that $I f = 0$, then the function $u: SM \to \R$ given by
     \begin{equation}
         u(x,v) = \int_0\taux f(\varphi_s(x,v)) ds,
     \end{equation}
     is smooth and solves the transport problem \nr{G-eq}.
 \end{lemma}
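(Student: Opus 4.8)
Here is how I would attack the statement.

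The plan is to first dispose of the transport equation and the boundary condition on the part of $SM$ where everything is manifestly smooth, and then to concentrate all the work on the glancing region $\pt_0 SM$. Strict magnetic convexity of $\pt M$ forces every magnetic geodesic through a point of $SM\setminus\pt_0 SM$ to meet $\pt M$ transversally, so by the implicit function theorem the exit time $\tau$ is smooth on $SM\setminus\pt_0 SM$; hence $u(x,v)=\int_0^{\tau(x,v)}f(\varphi_s(x,v))\,ds$ is smooth there, being assembled from smooth maps by an integration with a smooth upper limit. Since $\tau(\varphi_t(x,v))=\tau(x,v)-t$ for $t\in[0,\tau(x,v)]$, one has $u(\varphi_t(x,v))=\int_t^{\tau(x,v)}f(\varphi_s(x,v))\,ds$, and differentiating at $t=0$ gives $Gu=-f$ on $SM\setminus\pt_0 SM$. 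On $\pt_- SM\setminus\pt_0 SM$ we have $\tau\equiv 0$, hence $u\equiv 0$, while on $\pt_+ SM\setminus\pt_0 SM$, $u=If=0$ by hypothesis. Thus $u$ solves \nr{G-eq} off $\pt_0 SM$, and once we know that $u$ is smooth (in particular continuous) across $\pt_0 SM$, the identities $u|_{\pt SM}=0$ and $Gu=-f$ extend to all of $SM$ by density.

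For the smoothness across $\pt_0 SM$ I would enlarge the manifold. Choose a non-trapping magnetic system $(N,g,\omega)$ with strictly magnetic convex boundary such that $M\Subset N$ and $M$ is convex in $N$, meaning every maximal magnetic geodesic of $N$ meets $M$ in a single (possibly empty) closed interval; such a thin-collar enlargement is standard to construct, using that $\pt M$ is strictly magnetic convex (cf. \cite{dairbekov2007}). Extend $f$ to some $\tilde f\in\Cinf(SN)$ and set $\tilde u(x,v)=\int_0^{\tau_N(x,v)}\tilde f(\varphi_s(x,v))\,ds$, with $\tau_N$ the exit time of $N$. Since $\pt_0 SM$ lies over the interior of $N$ and $\tau_N$ is smooth on $SN\setminus\pt_0 SN\supset SM$, the function $\tilde u$ is smooth on a neighbourhood of $SM$ in $SN$; in particular $\tilde u$ is smooth at every glancing vector of $M$. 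Splitting the $N$-orbit of $(x,v)\in SM$ into its $M$-chord followed by the continuation in $N\setminus M$ gives $\tilde u(x,v)=u(x,v)+\tilde u(p_-(x,v))$, where $p_-(x,v)=\varphi_{\tau(x,v)}(x,v)\in\pt_- SM$ is the forward $M$-exit point. Applying the same identity at the entry point $p_+(x,v)=\varphi_{-\tau(x,-v)}(x,v)\in\pt_+ SM$ and using $u(p_+(x,v))=If(p_+(x,v))=0$ together with $\varphi_{\tau(p_+(x,v))}(p_+(x,v))=p_-(x,v)$ yields the second representation $u(x,v)=\tilde u(x,v)-\tilde u(p_+(x,v))$. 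Averaging the two,
\begin{equation*}
2\,u(x,v)=2\,\tilde u(x,v)-\tilde u(p_-(x,v))-\tilde u(p_+(x,v)).
\end{equation*}

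The remaining point, and the one I expect to be the genuine obstacle, is that $p_-$ and $p_+$ each carry a square-root singularity along $\pt_0 SM$ (the exit times solve an equation whose discriminant vanishes at glancing) while the combination above does not. Near a glancing vector, with $\rho$ a boundary defining function of $\pt M$, strict magnetic convexity makes $t\mapsto\rho(\varphi_t(x,v))$ have a nondegenerate maximum along the glancing orbit, so the Morse lemma with parameters produces a smooth local reparametrization of time $t=\psi(s,x,v)$ with $\rho(\varphi_{\psi(s,x,v)}(x,v))=\delta(x,v)-s^2$, where $\delta\geq 0$ is smooth and vanishes exactly on $\pt_0 SM$; then $p_\pm(x,v)=\varphi_{\psi(\pm\sqrt{\delta(x,v)},x,v)}(x,v)$. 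Hence $\tilde u(p_-(x,v))+\tilde u(p_+(x,v))=\Psi(\sqrt{\delta(x,v)},x,v)+\Psi(-\sqrt{\delta(x,v)},x,v)$, where $\Psi(s,x,v)=\tilde u(\varphi_{\psi(s,x,v)}(x,v))$ is smooth; being even in $\sqrt\delta$, the right-hand side is a smooth function of $\delta$ and of $(x,v)$ (Whitney's theorem on even functions, with parameters), hence smooth in $(x,v)$. Therefore $u$ is smooth on a neighbourhood of $\pt_0 SM$ in $SM$, and together with the first step this gives $u\in\Cinf(SM)$ solving \nr{G-eq}. This is the magnetic counterpart of the regularity argument of \cite{dairbekov2007}, and the sole place where the hypothesis $If=0$ is used in an essential way is in producing the symmetric representation that makes the cancellation above possible.
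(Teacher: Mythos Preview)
Your argument is correct in essence but takes a different route from the paper's proof of this lemma. The paper embeds $M$ in a slightly larger simple magnetic system $\overline{U}$, solves $Gr=-f$ on $S\overline{U}$ to obtain a function $\tilde r=r|_{SM}$ that is manifestly smooth, and then notes that $\tilde r-u$ is a first integral of $G$ whose restriction to $\partial_+SM$ is smooth (since $u|_{\partial SM}=0$); an appeal to \cite[Lemma~7.6]{dairbekov2007}, which says that the flow-invariant extension $w^\sharp$ of a function $w$ on $\partial_+SM$ is smooth iff its two-sided boundary trace $Aw$ is smooth, finishes the job. Your hands-on approach---averaging the entry and exit contributions and then invoking the Morse lemma with parameters together with Whitney's even-function trick---is in fact the strategy the paper uses for the \emph{next} lemma (Lemma~\ref{spacetime-u-lemma}), via its Lemma~\ref{tau-lemma}. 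The paper's proof of the present lemma is shorter because it outsources the glancing analysis to the cited black box; yours is more self-contained and makes explicit where the hypothesis $If=0$ produces the symmetrization that kills the square-root singularity.

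One genuine slip: you write the entry point as $p_+(x,v)=\varphi_{-\tau(x,-v)}(x,v)$, tacitly assuming time-reversal symmetry of the flow. Magnetic flows do not enjoy this symmetry---reversing time flips the sign of the Lorentz force---so the backward hitting time is the enter time $\sigma(x,v)$, and in general $\sigma(x,v)\neq\tau(x,-v)$ (the paper remarks on exactly this just before Lemma~\ref{tau-lemma}). Replacing $\tau(x,-v)$ by $\sigma(x,v)$, your argument goes through unchanged, since you never actually use the formula beyond naming the entry point; the Morse-lemma representation $p_\pm(x,v)=\varphi_{\psi(\pm\sqrt{\delta(x,v)},x,v)}(x,v)$ that you write afterwards is already the correct one.
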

 \begin{proof}
     The proof is similar to that of \cite[Prop. 4.3]{Ains_13}, but for ease of reading, we state the proof here explicitly. Let us first show that $u$ satisfies \nr{G-eq}. $Gu = -f$ follows immediately from the definition of the magnetic vector field and properties of the exit time. Also, $u\rvert_{\pt_- SM}$ is given by an integral over a zero-measure set and therefore vanishes. On the other hand, $u\rvert_{\pt_+ SM} = I_m[p,q] = 0$ by assumption.
     
     Next, let $\alpha : \pt_+ SM \to \pt_- SM$ be the scattering relation defined by $\alpha(x,v) = (\gamma_{x,v}(\tau(x,v)),\dot{\gamma_{x,v}}(\tau(x,v)))$, where $\gamma_{x,v}$ is the unique magnetic geodesic determined by the initial data $(x,v)$. Given $w \in \Cinf(\pt_+ SM)$, the function $w^\sharp : SM \to \R$ defined by $w^\sharp(x,v) = (w \circ \alpha\pminus{1})(\varphi_{\tau(x,v)}(x,v))$ is the unique solution to
     \begin{equation}
         G w^\sharp = 0, \quad w^\sharp\rvert_{\pt_+ SM} = w.
     \end{equation}
     Then we define the extension operator $A: w \mapsto Aw$ as follows
     \begin{equation}
         (Aw)(x,v) = \begin{cases}
             w(x,v), & (x,v) \in \pt_+ SM, \\
             (w \circ \alpha\pminus{1})(x,v), & (x,v) \in \pt_- SM.
         \end{cases}
     \end{equation}
     It is immediate that $Aw = w^\sharp\rvert_{\pt SM}$. Now, Lemma 7.6 of \cite{dairbekov2007} implies that $w^\sharp \in \Cinf(SM)$ if and only if $Aw \in \Cinf(\pt SM)$.
     
     Assume that $M$ is isometrically embedded into a closed manifold $N$, then extend $g$ and $\omega$ smoothly to a Riemannian metric and one-form on $N$ and $f$ to a smooth function on the unit sphere bundle $SN$. Let then $U$ be an open neighborhood of $M$ in $N$ with smooth boundary, such that $(\overline{U},g,\omega)$ is a simple magnetic system (such $U$ exists since a simple magnetic system is defined by an open condition). 

     Let then $r$ be the unique solution to the transport problem on $S\overline{U}$
     \begin{equation}
         G r = - f, \quad r\rvert_{\pt_- S\overline{U}} = 0.
     \end{equation}
     Then $\Tilde{r} = r\rvert_{SM}$ is a smooth function that satisfies $G r = - f$. Observe now that the difference $\Tilde{r} - u$ satisfies $G(\Tilde{r}-u) = 0$, and the function $w = (\Tilde{r} - u)\rvert_{\pt_+ SM}$ is smooth due to the fact that $\Tilde{r}$ is smooth and $u\rvert_{\pt_+ SM} = 0$. Then $w^\sharp = \Tilde{r} - u$, which is smooth if and only if $u$ is smooth. But now $Aw = w^\sharp\rvert_{\pt SM} = (\Tilde{r} - u)\rvert_{\pt SM} = \Tilde{r}\rvert_{\pt SM}$ and therefore $Aw$ is smooth. By what we found earlier, this implies that $w^\sharp$ is smooth. Hence, $u$ is also smooth. 
 \end{proof}

 \begin{lemma}\label{spacetime-u-lemma}
    Let $(M,g,\omega)$ be a simple magnetic system. If $f \in \Cinf(\R \times SM)$ satisfies $\LR f = 0$, then the solution $u : \R \times SM \to \R$ of \nr{transport0}, given by
    \begin{equation}
        u(t,x,v) = \int_0\taux f(\phi_s(t,x,v)) ds,
    \end{equation}
    is smooth.
\end{lemma}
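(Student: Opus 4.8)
The plan is to transcribe the proof of Lemma \ref{u-lemma}, carrying the time variable $t$ along as an inert parameter. Write $\phi_s(t,x,v) = (t(s),\varphi_s(x,v))$ with $t(s) = t + s - \Theta_s(x,v)$, where $\Theta_s(x,v) = \int_0^s\omega_{\gamma_{x,v}(\sigma)}(\dot\gamma_{x,v}(\sigma))\,d\sigma$, and recall that the exit time $\tau(x,v)$ depends only on $(x,v)\in SM$. First I would verify, exactly as in Lemma \ref{u-lemma}, that $u$ solves \nr{transport0}: differentiating under the integral gives $Xu = -f$, the integral defining $u\rvert_{\R\times\pt_- SM}$ is over a degenerate interval so it vanishes, and $u\rvert_{\R\times\pt_+ SM} = \LR f = 0$ by hypothesis; in particular $u\rvert_{\R\times\pt SM} = 0$.

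For the regularity, I would introduce the spacetime scattering relation $\alpha(t,x,v) = \phi_{\tau(x,v)}(t,x,v)$, a map $\R\times\pt_+ SM\to\R\times\pt_- SM$ whose $SM$-component is the usual magnetic scattering relation and whose $t$-component is $t + \tau(x,v) - \Theta_{\tau(x,v)}(x,v)$. For $w\in\Cinf(\R\times\pt_+ SM)$ let $w^\sharp(t,x,v) = (w\circ\alpha\pminus{1})(\phi_{\tau(x,v)}(t,x,v))$ on $\R\times SM$; this is the unique solution of $Xw^\sharp = 0$, $w^\sharp\rvert_{\R\times\pt_+ SM} = w$, and the extension operator $A$ defined as in Lemma \ref{u-lemma} again satisfies $Aw = w^\sharp\rvert_{\R\times\pt SM}$. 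The key input is the spacetime analogue of Lemma 7.6 of \cite{dairbekov2007}: $w^\sharp\in\Cinf(\R\times SM)$ if and only if $Aw\in\Cinf(\R\times\pt SM)$. I would prove this by the same argument with $t$ adjoined, the only new point being that the $t$-component $t + \tau(x,v) - \Theta_{\tau(x,v)}(x,v)$ of $\alpha$ and of $\phi_{\tau(x,v)}$ is linear in $t$ and, as a function of $(x,v)$, is smooth wherever $\tau$ is (it is a smooth function of $\tau(x,v)$ and of the endpoint of the corresponding magnetic geodesic); hence adjoining $t$ introduces no obstruction to smoothness beyond the ones already present at the level of $SM$, all of which sit over the glancing region $\pt_0 SM$.

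With this in hand I would run the bootstrap of Lemma \ref{u-lemma}: isometrically embed $M$ into a slightly larger simple magnetic system $(\overline U,g,\omega)$ (possible since simplicity is an open condition), extend $f$ to a smooth function on $\R\times S\overline U$, and let $r$ solve $Xr = -f$, $r\rvert_{\R\times\pt_- S\overline U} = 0$ on $\R\times S\overline U$. Since $M\subset U$, every point of $SM$ is interior to $S\overline U$, so $\tau_{\overline U}$ is smooth on $SM$ and $\tilde r := r\rvert_{\R\times SM}$ is smooth with $X\tilde r = -f$, just as in Lemma \ref{u-lemma}. Then $\tilde r - u$ satisfies $X(\tilde r - u) = 0$; the function $w := (\tilde r - u)\rvert_{\R\times\pt_+ SM}$ is smooth because $\tilde r$ is smooth and $u\rvert_{\R\times\pt_+ SM} = 0$, so by uniqueness $\tilde r - u = w^\sharp$. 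Moreover $Aw = w^\sharp\rvert_{\R\times\pt SM} = (\tilde r - u)\rvert_{\R\times\pt SM} = \tilde r\rvert_{\R\times\pt SM}$ (using $u\rvert_{\R\times\pt SM} = 0$), which is smooth. The characterization above then gives $w^\sharp\in\Cinf(\R\times SM)$, and therefore $u = \tilde r - w^\sharp$ is smooth.

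The main obstacle is the spacetime version of Lemma 7.6 of \cite{dairbekov2007}, i.e.\ the equivalence between smoothness of $w^\sharp$ on $\R\times SM$ and smoothness of its boundary trace $Aw$; everything else is a line-by-line transcription of the proof of Lemma \ref{u-lemma}. The one genuinely new verification is that the extra coordinate $t$, transported along the null flow by the smooth shift $s\mapsto t + s - \Theta_s(x,v)$, does not create new singularities, so that the only source of non-smoothness remains the glancing set $\pt_0 SM$ exactly as in the underlying magnetic problem.
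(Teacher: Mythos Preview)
Your plan is correct in outline but takes a genuinely different route from the paper. You transcribe the proof of Lemma \ref{u-lemma}, reducing everything to a spacetime analogue of the characterization ``$w^\sharp$ smooth iff $Aw$ smooth'' (Lemma 7.6 of \cite{dairbekov2007}), which you identify as the main obstacle and sketch but do not prove. The paper does not go this way at all. Instead it embeds $M$ in a closed manifold $N$, defines the manifestly smooth function $F(r,t,x,v)=\int_0^r f(\phi_s(t,x,v))\,ds$ on $\R^2\times SN$, and observes $u=F(\tau(x,v),t,x,v)$. The hypothesis $\LR f=0$ gives $F(\tau,\cdot)=F(-\sigma,\cdot)$, so $u=\tfrac12\big(F(\tau,\cdot)+F(-\sigma,\cdot)\big)$. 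A separate Lemma \ref{tau-lemma} shows that near any glancing point $\tau=Q(\sqrt{a},\cdot)$ and $-\sigma=Q(-\sqrt{a},\cdot)$ for smooth $Q,a$ with $a\ge 0$; the even symmetry in $r\mapsto\tfrac12\big(F(Q(r,\cdot),\cdot)+F(Q(-r,\cdot),\cdot)\big)$ then yields, via the Whitney-type argument of \cite[Thm.~5.1.1]{paternain2023geometric}, a smooth $H$ with $u=H(a(x,v),t,x,v)$. So the paper handles the glancing singularity by an explicit even/odd analysis and never introduces the scattering-relation characterization; your approach is more structural and parallel to Lemma \ref{u-lemma}, but the glancing analysis is hidden inside the black box you defer to. If you were to unpack the proof of that spacetime Lemma 7.6, you would likely end up doing something close to what the paper does directly.
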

To prove the above lemma, we first define the \emph{enter time} of a magnetic geodesic: given a maximal magnetic geodesic $\gamma_{x,v}: [-\sigma(x,v),\tau(x,v)] \to M$ with an initial point $(x,v) \in SM$, we call $\sigma(x,v)$ its enter time. Notice that $\sigma(x,v) \neq \tau(x,-v)$ for magnetic geodesics in general. Then we need to know how the enter and exit times behave near a point in $\pt_0 SM$ (cf. \cite[Lemma 3.29]{paternain2023geometric}).
\begin{lemma}\label{tau-lemma}
    Let $(M,g,\omega)$ be a simple magnetic system, and let $(x_0,v_0) \in \pt_0 SM$. Provided that $M$ is embedded in a closed manifold $N$ of the same dimension, it holds near $(x_0,v_0)$ in $SM$ that
    \begin{align}
            \tau(x,v) &= Q(\sqrt{a(x,v)},x,v) \\
            -\sigma(x,v) &= Q(-\sqrt{a(x,v)},x,v),
    \end{align}
    where $Q$ is smooth in a neighborhood of $(0,x_0,v_0)$ in $\R \times SN$, $a$ is smooth in a neighborhood of $(x_0,v_0)$ in $SN$, and $a \geq 0$ in $SM$.
\end{lemma}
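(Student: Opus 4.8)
The plan is to reduce the statement to the Morse lemma with parameters, applied to a boundary defining function pulled back along the magnetic flow. Extend $g$ and $\omega$ smoothly from $M$ to $N$, denote by $\varphi^N_s$ the resulting magnetic flow on $SN$ (defined for $s$ near $0$ and base points near $(x_0,v_0)$, $N$ being closed), and fix a smooth function $\rho$ defined near $\pt M$ in $N$ with $\rho>0$ on the interior of $M$, $\rho<0$ on the other side, and $\rho=0$, $\nabla\rho=\nu$ on $\pt M$; a positive multiple of the signed $g$-distance to $\pt M$ works. Set $h(s,x,v)=\rho\big(\pi_N(\varphi^N_s(x,v))\big)$, which is smooth near $(0,x_0,v_0)$ in $\R\times SN$. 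Since for $(x,v)\in SM$ the maximal magnetic geodesic through $(x,v)$ stays in $M$ exactly where $h(\cdot,x,v)\ge 0$, the times $\tau(x,v)$ and $-\sigma(x,v)$ are the first forward and backward zeros of $s\mapsto h(s,x,v)$, so it suffices to describe these zeros near $s=0$.

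First I would compute the $2$-jet of $h$ at the glancing point. As $x_0\in\pt M$ and $v_0\in T_{x_0}\pt M$, one has $h(0,x_0,v_0)=0$ and $\pt_s h(0,x_0,v_0)=d\rho_{x_0}(v_0)=0$; differentiating once more and using the Lorentz force law $D_s\dot\gamma=F(\dot\gamma)$ from Section 2,
\[
\pt_s^2 h(0,x_0,v_0)=(\nabla^2\rho)_{x_0}(v_0,v_0)+d\rho_{x_0}\!\big(F_{x_0}(v_0)\big)=-\Pi_{x_0}(v_0,v_0)+g_{x_0}\!\big(F_{x_0}(v_0),\nu(x_0)\big),
\]
using $(\nabla^2\rho)|_{\pt M}(v,v)=-\Pi(v,v)$ and $d\rho|_{\pt M}=g(\nu,\cdot\,)$ for this $\rho$. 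Strict magnetic convexity says precisely that the right-hand side is negative, so $\pt_s^2 h<0$ on a neighborhood of $(0,x_0,v_0)$; hence near $s=0$ the function $s\mapsto h(s,x,v)$ has a unique critical point $s_*(x,v)$, smooth in $(x,v)$ by the implicit function theorem with $s_*(x_0,v_0)=0$, and it is a strict maximum.

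Then I would set $a(x,v)=h(s_*(x,v),x,v)$, smooth near $(x_0,v_0)$ with $a(x_0,v_0)=0$; for $(x,v)\in SM$ one has $a(x,v)\ge h(0,x,v)=\rho(x)\ge 0$, giving $a\ge 0$ on $SM$. Taylor's theorem with integral remainder yields $h(s,x,v)=a(x,v)-(s-s_*)^2 r(s,x,v)$ with $r$ smooth and positive near the point, so $\zeta(s,x,v):=(s-s_*(x,v))\sqrt{r(s,x,v)}$ is smooth with $\pt_s\zeta>0$, and $h=a-\zeta^2$. Inverting $s\mapsto\zeta$ produces a smooth $Q$ near $(0,x_0,v_0)$ in $\R\times SN$ with $Q(0,x_0,v_0)=0$ such that $h(s,x,v)=0\iff\zeta=\pm\sqrt{a(x,v)}\iff s=Q(\pm\sqrt{a(x,v)},x,v)$. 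Finally one checks that, after shrinking the neighborhood, the segment $[\,Q(-\sqrt a,x,v),Q(\sqrt a,x,v)\,]$ is the maximal piece of the magnetic geodesic through $(x,v)$ lying in $M$: it contains $0$ since $\rho(x)\ge 0$, $h>0$ in its interior since $s_*$ is the only nearby critical point, and $h$ is monotone on either side of $s_*$, so its endpoints are the first forward and backward hits of $\pt M$. This gives $\tau=Q(\sqrt{a},x,v)$ and $-\sigma=Q(-\sqrt{a},x,v)$.

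The main obstacle is this last identification: one must rule out that, near a glancing direction, the geodesic leaves $M$ "around the side" of the local chart, so that the nearby zeros of $h$ are genuinely the geometric exit and enter times. This is a continuity/compactness matter — fix the neighborhood of $(0,x_0,v_0)$ on which $\pt_s^2 h<0$ and $\zeta$ is fibrewise a diffeomorphism, then shrink in $(x,v)$ so that the relevant geodesic arcs stay inside it — but it is the step that needs care. The jet computation and the parametric Morse lemma are routine once strict magnetic convexity pins down the sign of $\pt_s^2 h$, and the scheme is the same as for the geodesic case in \cite[Lemma 3.29]{paternain2023geometric}.
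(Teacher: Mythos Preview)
Your proposal is correct and follows essentially the same approach as the paper: define $h(s,x,v)=\rho(\gamma_{x,v}(s))$ for a boundary defining function $\rho$, compute the 2-jet at the glancing point using strict magnetic convexity to get $\pt_s^2 h<0$, and then invoke a parametric Morse lemma to parametrize the two nearby zeros of $h(\cdot,x,v)$ by $Q(\pm\sqrt{a},x,v)$. The only difference is that where the paper outsources the last step to \cite[Lemma~3.2.10]{paternain2023geometric}, you carry it out by hand (critical point $s_*$, Taylor remainder, inversion of $\zeta$), and you are more explicit than the paper about the final identification of these zeros with the geometric exit and enter times.
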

\begin{proof}
    Let $(N,g)$ be a closed extension of $(M,g)$, and let $\rho \in \Cinf(N)$ be a boundary defining function for $M$, such that $\na \rho$ is the inside pointing unit normal on $\pt M$ (see \cite[Lemma 3.1.10]{paternain2023geometric}). Then define the function $h(s,x,v) = \rho(\gamma_{x,v}(s))$, which is smooth in a neighborhood of $(0,x_0,v_0)$. Now, $h(0,x_0,v_0) = 0$, and
    \begin{equation}
        \pt_s h (0,x_0,v_0) = d\rho (\dot{\gamma}_{x_0,v_0}(0)) = g(\na \rho , v_0) = 0
    \end{equation}
    since $v_0 \in T_{x_0} \pt M$. Further,
    \begin{align}
        \pt_s^2 h (0,x_0,v_0) &= g(\na_{\dot{\gamma}_{x_0,v_0}}\na\rho, v_0) + g(\na\rho, \na_{\dot{\gamma}_{x_0,v_0}(0)} \dot{\gamma}_{x_0,v_0} (0)) \nn \\
        &= g (\na_{v_0} \na \rho, v_0) + g(\na\rho, F(v_0)) \nn \\
        &= - \Pi(v_0,v_0) + g(\nu(x_0), F(v_0)) < 0,
    \end{align}
    by strict magnetic convexity. Then, notice that $h(\tau(x,v),x,v) = h(-\sigma(x,v),x,v) = 0$ for any $(x,v) \in SM$; in particular, this holds near $(0,x_0,v_0)$. Lemma 3.2.10 in \cite{paternain2023geometric} now implies that $\tau(x,v) = Q (\pm \sqrt{a(x,v)},x,v)$ and $-\sigma(x,v) = Q(\mp \sqrt{a(x,v)},x,v)$, where the function $Q$ is smooth near $(0,x_0,v_0)$ in $\R \times SN$ and has $Q(\sqrt{a(x,v)},x,v) \geq Q(-\sqrt{a(x,v)},x,v)$ when $(x,v) \in SM$, and the function $a$ is smooth near $(x_0,v_0)$ in $SN$ and satisfies $a(x,v) \geq 0$ when $(x,v) \in SM$. Then, from the fact that $-\sigma(x,v) \leq \tau(x,v)$ we finally get our claim.
\end{proof}

\begin{proof}[Proof of Lemma \ref{spacetime-u-lemma}]
    Embed $(M,g)$ isometrically in a closed manifold $(N,g)$ with the same dimension, and extend $\omega$ and $f$ smoothly to $N$ and $SN$, respectively. Define a function $F: \R^2 \times SN \to \R$ by
    \begin{equation}
        F(r,t,x,v) = \int_0^r f(\phi_s(t,x,v))ds.
    \end{equation}
    Observe that $F$ is naturally smooth everywhere, and $u(t,x,v) = F(\tau(x,v),t,x,v)$. Let $S: \R\times\pt_+ SM \to \R\times\pt_- SM$ be the scattering relation of null geodesics in $\overline{M}$. Now, since $\LR f = 0$ (in the submanifold $\R \times SM$), we have, for any $(t,x,v) \in \R \times SM$,
    \begin{align}
        0 &= (\LR f \circ S\pminus{1})(\phi_{\tau(x,v)}(t,x,v)) = \int_{-\sigma(x,v)}\taux f(\phi_s(t,x,v))ds \nn \\ 
        &= \int_{-\sigma(x,v)}^0 f(\phi_s(t,x,v))ds + \int_0\taux f(\phi_s(t,x,v))ds.
    \end{align}
    Thus,
    \begin{align}
        F(\tau(x,v),t,x,v) &= \int_0\taux f(\phi_s(t,x,v))ds \nn \\
        &= - \int_{-\sigma(x,v)}^0 f(\phi_s(t,x,v))ds \nn \\ 
        &= \int^{-\sigma(x,v)}_0 f(\phi_s(t,x,v))ds \nn \\
        &= F(-\sigma(x,v),t,x,v),
    \end{align}
    so that we may write
    \begin{equation}
        F(\tau(x,v),t,x,v) = \frac{1}{2}(F(\tau(x,v),t,x,v) + F(-\sigma(x,v),t,x,v)).
    \end{equation}
    With a simple argument involving the implicit function theorem, one can see that the exit time $\tau$ is smooth everywhere in $SM \setminus \pt_0 SM$ if $\pt M$ is strictly magnetic convex. Therefore, $u$ is obviously smooth in $\R \times (SM \setminus \pt_0 SM)$. To show that $u$ is also smooth in the rest of $\R \times SM$, let $(t_0,x_0,v_0) \in \R \times \pt_0 SM$. By Lemma \ref{tau-lemma}, we have
    \begin{equation}
        F(\tau(x,v),t,x,v) = \frac{1}{2}(F(Q(\sqrt{a(x,v)},x,v),t,x,v) + F(Q(-\sqrt{a(x,v)},x,v),t,x,v)).
    \end{equation}
    near $(t_0,x_0,v_0)$ in $\R \times SM$.
    Now as in the proof of \cite[Thm. 5.1.1]{paternain2023geometric}, we deduce the existence of a function $H$ that is smooth near $(0,t_0,x_0,v_0)$ in $\R \times SM$ and satisfies
    \begin{equation}
        H(r^2, t,x,v) =  \frac{1}{2}(F(Q(r,x,v),t,x,v) + F(Q(-r,x,v),t,x,v)).
    \end{equation}
    Hence,
    \begin{equation}
        u(t,x,v) = H(a(x,v),t,x,v)
    \end{equation}
    near the point $(t_0,x_0,v_0)$ in $\R \times SM$, so that $u$ is smooth in a neighborhood of $(t_0,x_0,v_0)$ in $\R \times SM$.
\end{proof}

\bibliographystyle{alpha}
\bibliography{main}

\end{document}